\newcommand{\eq}[1]{(\ref{#1})}
\newcommand{\mysgn}{\mathop{sgn}}
\renewcommand{\Re}{\mathop{\rm Re}\nolimits}
\renewcommand{\Im}{\mathop{\rm Im}\nolimits}
\newcommand{\la}{\lambda}
\newcommand{\tht}{\theta}
\newcommand{\om}{\omega}
\newcommand{\Om}{\Omega}
\newcommand{\eps}{\varepsilon}
\newcommand{\ksi}{\xi}
\newcommand{\Z}{\mathbb{Z}}
\newcommand{\R}{\mathbb{R}}
\newcommand{\CC}{\mathbb{C}}
\newcommand{\HHH}{{\cal{H}}}
\newcommand{\calI}{{\cal{I}}}
\newcommand{\rr}{\mbox{\boldmath$r$\unboldmath}}
\newcommand{\thtt}{\mbox{\boldmath$\theta$\unboldmath}}
\newcommand{\xx}{{\bf x}}
\newcommand{\Phibar}{\overline{\Phi}}
\newcommand{\vbar}{\overline{v}}
\newcommand{\del}{\partial}
\newcommand{\yp}{y_{+}^{(j+m)}}
\newcommand{\ym}{y_{-}^{(j-m)}}
\newcommand{\cto}{\rightarrow}
\newcommand{\laplace}{{\mathbf{\triangle}}}
\newcommand{\grad}{{\mathbf{\nabla}}}
\newcommand{\rme}{{\rm e}}
\newcommand{\rmi}{{\rm i}}
\newcommand{\inv}{^{-1}}
\newcommand{\sigdisc}{\sigma_{\rm disc}}
\newcommand{\sigess}{\sigma_{\rm ess}}
\newcommand{\nucl}[3]{%
  \ensuremath{%
    \phantom{\ensuremath{^{#1}_{#2}}}%
    \llap{\ensuremath{^{#1}}}%
    \llap{\ensuremath{_{\rule{0pt}{.75em}#2}}}%
    \mbox{#3}%
  }%
}
\newcommand{\gp}{Gross-Pitaevskii}
\newcommand{\schr}{Schr\"od\-inger}
\newtheorem{theorem}{Theorem}
\newtheorem{proposition}{Proposition}
\newtheorem{definition}{Definition}
\newtheorem{corollary}{Corollary}
\newtheorem{lemma}{Lemma}
\begin{document}

\title{Spectral stability of vortices in two-dimensional\\ 
Bose-Einstein condensates via the Evans function\\ 
and Krein signature}

%\date{\today}

\author{Richard Koll{\'a}r \\
Department of Applied Mathematics and Statistics\\
Faculty of Mathematics, Physics and Informatics \\
Comenius University \\
 Mlynsk{\'a} dolina, 842 48 Bratislava, Slovakia \\
E-mail: kollar\@ fmph.uniba.sk}

\date{Robert L.~Pego \\
Department of Mathematical Sciences \\
and Center for Nonlinear Analysis \\
Carnegie Mellon University\\  
Wean Hall 6113, 
Pittsburgh, PA 15213}

%\keywords{spectral stability; vortices; Gross-Pitaevskii equation; Bose-Einstein condensates; 
%Evans function; Krein signature}

%\received{}

\maketitle

\begin{abstract} %
\noindent
We investigate spectral stability of vortex solutions of the
Gross-Pitaevskii equation, a mean-field approximation for
Bose-Einstein condensates (BEC) in an effectively two-dimensional
axisymmetric harmonic trap.  We study eigenvalues of the linearization
both rigorously and through computation of the Evans function, 
a sensitive and robust technique whose use we justify mathematically. 
The absence of unstable eigenvalues is justified {\em a posteriori}
through use of the Krein signature of purely imaginary eigenvalues,
which also can be used to significantly reduce computational effort. 
In particular, we prove general basic continuation results
on Krein signature for finite systems of eigenvalues in
infinite-dimensional problems.
\end{abstract}

%%%%%%%%%%%%%%%%% INTRODUCTION
\section{Introduction} \label{s:intro}%

{\em Background.} Since the experimental creation 
of Bose-Einstein Condensates (BEC) in alkali vapors in 1995 
\cite{cornell,ketterle}, BEC are one of the most active areas of modern 
condensed-matter physics.
A general overview of the subject can be found in
\cite{DGPS, Pethick}, and particularly in the review book \cite{KFC}.
In the Hartree-Fock mean-field approximation, BEC are modeled by the nonlinear
Schr{\"o}dinger equation (NLS) with a non-local nonlinearity. A
traditional simplification, replacing the non-local interaction
potential with a localized short-range interaction proportional to the
delta function, leads to the Gross-Pitaevskii equation
\begin{equation}\label{GP}
i \hbar \psi_t  = 
\left( - \frac {\hbar^2}{2M}  \laplace  +  V(\xx) + i\hbar \Om \del_{\theta} 
+ g |\psi|^2 \right) \psi \, ,
\end{equation}
where $\hbar$ is Planck's constant, $M$ is the atomic mass of atoms in
the condensate, 
$\theta$ is an azimuthal angle in cylindrical coordinates,  
and $g$ is an interaction strength parameter.
The total number of particles $N$ in the condensate is given by the integral
\begin{equation}
N = \int_{\R^3} |\psi|^2 dx^3\, ,
\end{equation}
and is conserved during the evolution of the system.
Equation \eq{GP} is a nonlinear Schr{\"o}dinger equation with a cubic
nonlinearity (focusing or defocusing depending on whether the
interaction is attractive or repulsive, respectively) and with a
spatially dependent trap potential $V(\xx)$ stationary in a frame rotating with frequency 
$\Om$ about the vertical axis. 
A rigorous mathematical 
justification of the Gross-Pitaevskii model for the BEC
ground state under various conditions directly
from many-body Schr{\"o}dinger equations was done in a series of papers of 
Lieb {\it et al.}~\cite{LS}--\cite{LS2}.

From the point of view of nonlinear waves, the interesting phenomena
is that the Gross-Pitaevskii equation, similarly to some other nonlinear 
Schr{\"o}dinger equations, supports the existence of various types of 
solitary wave solutions. In the two-dimensional setting we will study
in particular, there are vortex solutions which have the
form
$$
\psi(t,r,\theta) = e^{-i\mu t}e^{im\theta} w(r),
$$
where $r,\theta$ are polar coordinates, $m$ is vortex degree, $\mu$ is
the vortex rotation frequency (physically, chemical potential) 
and $w(r)$ is the radial vortex profile. 

Problems of stability for vortex solutions to various forms of nonlinear
Schr{\"o}d\-inger equations have drawn much attention in recent years. 
Related questions for models arising from nonlinear optics,
micromagnetics and Bose-Einstein condensation have been considered
extensively in the mathematical and physical literature.
For recent work concerning spectral stability questions for various
types of matter-waves, including vortices, vortex rings, multi-poles, soliton
and vortex necklaces in the presence of magnetic traps and optical
lattices, see for example \cite{KK, KKC, KKF, KFC, KKFM}. 
Rigorous mathematical results on these questions are rather few, 
however, due to the strong nonlinearity and complexity of the system.

{\em This work.}
In the present work we study the spectral stability of a single
two-dimensional axisymmetric vortex trapped in an axisymmetric harmonic trap.  
For this simple well-studied physical setting, we develop an approach 
that involves a combination of analytical and numerical tools 
which allows us to obtain reliable results for large particle number, 
well into the Thomas-Fermi regime (e.g. $N\sim10^6$ atoms of  \nucl{23}{}{Na}).
It is important to note that in the present 
axisymmetric setting, rotation of the trap does not influence the dynamic
stability of vortices, as the rotation term can be removed by
transforming to an appropriately rotating coordinate frame.

{\em Analytical results.}
Our study involves a number of analytical results that we prove
in sections 5 and 6 concerning the spectrum of the operator 
one obtains by linearizing about a vortex solution.  
In section 5, we prove that due to the harmonic trapping potential the 
essential spectrum of this operator is empty --- hence the spectrum
consists entirely of isolated eigenvalues of finite multiplicity.
The real part of any eigenvalue satisfies an explicit bound
depending only on the (non-dimensionalized) frequence $\mu$
and degree $m$ of the vortex, namely
\begin{equation}\label{e1.lam-bd}
|\Re \la| < 3(\mu  - m) \, .
\end{equation}
The eigenvalue problem breaks into an infinite system of coupled
pairs of ordinary differential equations (ODEs) for azimuthal
Fourier modes indexed by $j\in\Z$. As is known from \cite{GG},
only finitely many of these are relevant for possible instability,
namely the ones satisfying 
\begin{equation}\label{jbd0}
0<|j|<2m.
\end{equation}
We construct a globally analytic Evans function \cite{CKRTJ, Evans} 
associated with each of the ODE pairs, whose zeros correspond to the 
eigenvalues.  These results extend 
the approach of \cite{PW} for focusing-defocusing nonlinear Schr{\"o}dinger 
equations to handle a harmonic trapping potential.

One of the weaknesses in the numerical investigations in \cite{PW} was that 
one could not be confident that all unstable eigenvalues were detected,
due to the absence of any bound on their imaginary part.
No such bound is available in the present problem either. 
Nevertheless, we will show how one can indeed account for all 
possible instabilities through use of Krein signature (the sign of the
linearized energy of associated eigenmodes).

The key property of Krein signature that makes it useful is
its invariance under continuous variation of parameters such as
the standing-wave frequency $\mu$ and the size of the condensate. 
In section 6 we extend results that are well-known for finite-dimensional
systems to establish a general continuation property 
for any family of operators that is {\em resolvent-continuous},
a natural notion of weak continuity that one expects to hold
in many applications to infinite-dimensional systems.
Preservation of a definite signature is proved for
any finite system of imaginary eigenvalues for such a family.
By consequence, the only way eigenvalues can leave the imaginary axis
is through collision with eigenvalues of opposite Krein signature.

There are three reasons why Krein signature is extremely helpful and a
powerful tool in the present work.  First, it allows us to explain the
collisions (or avoided collisions) of eigenvalues found in our
numerical computations. Moreover, in combination with numerical plots,
it provides a numerically convincing {\it a posteriori} justification
that there are no unstable eigenvalues outside a certain fixed box in
the complex plane.  Finally, it can be used to significantly reduce 
the amount of necessary computation, as we shall discuss in section 7.2.

{\em Numerical methods.}
In order to locate zeros of the Evans functions using the argument
principle, we design and implement a numerical method somewhat more
robust than that used in \cite{PW}. We use a path-following technique and
multiple shooting to compute the nonlinear wave profile, and a rescaled
exterior-product representation of the Evans function to handle problems of
rapid growth and numerical dependence.
A numerical path-following technique for radially symmetric profiles
was also used by Edwards {\it et al.} \cite{EDCB}.

{\em Numerical results.}
In agreement with previous studies in the physical literature
\cite{GG, KKC, Pu, Skryabin}, we find a
singly-quantized vortex $(m=1)$ spectrally stable while the stability
of multiply-quantized vortices (with $m=2$ and 3) 
depends on the diluteness of the condensate,
with alternating intervals of stability and instability as $N g$ varies. 
Pu {\it et al.}  \cite{Pu} in their analysis use a different numerical method 
(finite elements) which is a priori less reliable and sensitive 
than the approach used here.  
Moreover, our results account for the appearance of {\em all} instabilities 
through the tracking of all eigenvalues of negative Krein signature. 
The presence of unstable eigenvalues (eigenvalues with positive real part) 
for certain parameters is also corroborated by direct simulation
of time-dependent dynamics based on the splitting scheme proposed in
\cite{Bao}. 

{\em Symmetries.}
In the computations one observes a special set of eigenvalues
which remain constant under variation of standing-wave frequency and
condensate size.  These eigenvalues arise from the 
symmetries of the Gross-Pitaevskii equation, as we discuss in an appendix. 
One symmetry is particularly interesting~---~a {\it breather boost},
which we found first described in \cite{PR}.
This is self-transformation of the Gross-Pitaevskii equation related to the
Talanov lens transformation, involving 
a time-periodic dilation of space with an appropriate radial phase
adjustment. This symmetry corresponds to eigenvalues 
$\pm2\rmi\omega$ of the \gp\
equation with the harmonic potential $V(\xx) = \frac12{\om^2}|\xx|^2$, 
linearized about a central vortex, with mode shapes
corresponding to infinitesimal breathing oscillations. 

{\em Related literature.}
Let us now discuss some known results on stability which are relevant to
our problem.  In general, two different concepts of stability are
distinguished in the literature: energetic and dynamic stability.  
A solution is energetically stable if it minimizes an associated energy
functional within a certain class of functions. 

The simplest energetic stability approach, where the minimization takes
into account only single vortex solutions with different charges,
indicates that a high enough trap rotation frequency can eventually
stabilize a vortex of any degree \cite{CD}.  On the other hand,
without external trap rotation, the energy of a single
multi-quantized vortex of charge $m$ is larger than the energy of $m$
singly-quantized vortices, and thus multi-quantum vortices are believed
to be unstable. The total energy in this case also depends on the
relative location of vortices as they tend to form regular hexagonal
arrays in harmonic traps. 

A mathematical framework for a rigorous variational approach was
discussed by Aftalion and Du \cite{AD}. Their method for effectively
2D condensates is parallel to the Ginzburg-Landau theory
of superconductors.  In \cite{VSS1} the authors claim that
sufficiently fast rotation in combination with a strong pinning
potential is capable of making even multi-quantum vortices energetically
stable.  

A detailed rigorous analysis was conducted by Seiringer \cite{S},
who studied regimes when a vortex solution can be a global energy
minimizer. He proves that for any $0 < \Om < \Om_c$ there exists
$m_{\Om}$ (independent of an interaction potential) such that all
vortices with  charge $m > m_{\Om}$ are energetically unstable; i.e.,
they are not global minimizers (ground states) of the energy
functional (see also \cite{Ignat}). Moreover, he proves that all
multi-quantized vortices, $m \ge 2$, become  energetically unstable
for a large enough value of the chemical potential of the condensate.
Finally, he proves that symmetry breaking of the axisymmetric
vortex solution is inevitable for any $m$, even for a singly-quantized
vortex for a large enough interaction strength, since no ground state
is an eigenfunction of the angular momentum.  The symmetry breaking of
a one-dimensional ground state is also demonstrated by a dynamical
systems analysis in \cite{JW} in the case of a double-well trapping
potential.

Energetic stability provides a sufficient condition for dynamic 
stability --- ground states of the energy functional 
are nonlinearly orbitally Lyapunov stable, i.e., if the initial data are
``close'' to the ground state solution then the perturbed solution remains
``close'' to the ground state solution for all times.
(See \cite{JW} for 
a detailed dynamic stability study of a one-dimensional model and
a sketch of the proof of well-posedness for the initial-value problem
for the Gross-Pitaevskii equation.)
As dynamic stability need not imply energetic stability, however,
it may not be possible to draw conclusions on dynamic instability 
directly from considerations of the energy functional. 
The study of linear or spectral stability 
can be helpful since one may detect possible instabilities 
due to the presence of eigenvalues in the right half-plane.

In the physical literature, 
Garc{\'\i}a-Ripoll and P{\'e}rez-Garc{\'\i}a \cite{GG} 
and Pu et al.~\cite{Pu} have studied linear stability of single multi-quantized
vortices using  equations equivalent to those here. 
The numerical results of \cite{Pu} agree substantially with those
of the present paper.
In \cite{GG} the search for instability is restricted only to the
so-called {\it anomalous modes} -- modes with a negative linearized energy.  
As pointed out in \cite{IM} for example, these modes are not intrinsically
unstable in the sense that some dissipation mechanism must be
introduced into the system for them to become relevant.  
The numerical techniques used in \cite{GG} and \cite{Pu} rely on
Galerkin-type approximations.
A finite-temperature generalization is further studied in \cite{VSS}.  

Use of the Krein signature as a tool to study stability of nonlinear waves has
appeared recently in various studies, see \cite{PC, HarKap, KapKrein, Sandstede, Pelinovsky} 
and references therein.
Skryabin in \cite{Skryabin2} (also see \cite{Skryabin}) studies a binary mixture
of trapped condensates using such information.
Kapitula {\it et al.} \cite{KKC} study
stability of various types of matter-waves including localized
vortices. Their perturbation argument, combined with topological
methods based on Krein signature, describes in detail transition to
instability in the limit of weak atomic interactions. 
Finally, Kapitula and Kevrekidis \cite{KaKe, KK}  have studied
Bose-Einstein condensates in the presence of a magnetic trap and
optical lattice, making efficient use of information on the Krein
signature of relevant eigenvalues. 

{\em Organization of this paper.}
First, Sections 2 and 3
introduce notation and recall background results regarding the
Gross-Pitaevskii equation and vortex solutions.  
Section 4 contains most of our analytical results, concerning
linearization, essential spectrum, bounds on eigenvalues, and
reduction to ODEs.  Moreover, we establish a precise asymptotic
description of eigenfunctions necessary for construction of
the Evans function. 
The Evans function itself is constructed in Section 5.  
In Section 6, we discuss the Krein signature.
We describe in detail our numerical procedure and discuss the numerical
results in Section 7.
Finally, in an Appendix we discuss symmetries and boosts of the
problem and relate them to the eigenvalues which do not change as
the standing wave frequency varies.

%\pagebreak

\section{\label{sec:level1-GP}%
The Gross-Pitaevskii Equation}
The behavior of low-temperature Bose-Einstein condensates (BEC) trapped
in the harmonic potential 
$V(\xx)$ rotating with the angular velocity $\Om$ about the $z$-axis
is well described by the time-dependent Gross-Pitaevskii equation.  
The wave function $\psi(\xx,t)$ 
satisfies \eq{GP} (in three dimensions)
with trapping potential $V(\xx)=V(x,y,z)$ given by
$$ 
V(\xx) = V_{3D}(\xx) =  \frac 12 M 
\left( \om_x^2 x^2 + \om_y^2 y^2 + \om_z^2 z^2 \right)\, . 
$$
The interaction strength parameter $g$ is
$$
g = g_{3D}  =  \frac {4\pi \hbar^2 a}{M} \, ,
$$
where $a$ is the $s$-wave scattering length \cite{LS}.
The term $\Om \del_{\theta}$ corresponds to
the angular momentum ${\mathbf \Om} \cdot ({\mathbf r} \times \grad)$ 
of the condensate caused by a rotating 
frame of coordinates. 
The total number of particles in the condensate $N$ is given by the integral
\begin{equation}
\int_{\R^3} |\psi|^2 dx^3= N\, ,
\end{equation}
and is conserved during the evolution of the system.
For disc-shaped (pancake) traps ($\om_z^2 \gg \om_x^2$, $ \om_z^2 \gg \om_y^2$) 
it was justified \cite{Bao} that  
the system is well approximated by a planar two-dimensional reduced model. 
The equation \eq{GP} formally does not change; one only needs to set
\begin{eqnarray*} 
V(x) & = & V_{2D}(x)  =  \frac 12 M \om_{tr}^2 \left( x^2 + \la_{tr}^2 y^2 \right) \, ,\\
g & = & g_{2D}  =  g_{3D} \cdot \left( \frac {M\om_z}{2\pi \hbar}\right)^{1/2} \, ,
\end{eqnarray*}
where $\om_x = \om_{tr}$ and $\om_y = \om_{tr} \la_{tr}$. 
For purpose of numerical investigations in this work 
the same values of parameters were used as in \cite{Pu}:
a condensate consisting of atoms of \nucl{23}{}{Na} is considered with $a = 2.75$~nm, 
$\om_z = 2 \pi \times 200$~Hz, 
$\om_{tr} = 2\pi \times 10$~Hz  ($\om_z \gg \om_{tr}$), $M =
10^{-26}$~kg and the Planck constant $\hbar = 6.6261 \times 10^{-34}$~Js.
A similar set of parameters used in the experiment as cited in \cite{PhysExp, VSS1}
with a \nucl{87}{}{Rb} condensate is: 
$M = 3.81 \times 10^{-26}$~kg, $a = 5.77$~nm, 
$\om = \om_z = \om_{tr} = 2\pi \times 200$~Hz. 
In these experiments the number of particles was approximately $N =
2\times 10^5$, horizontal and vertical condensate sizes were $R = 20$~$\mu$m,  $L = 10$~$\mu$m 
and the temperature $T_c = 1$~$\mu$K. Both \nucl{23}{}{Na} and \nucl{87}{}{Rb} represent alkali gases with 
a repulsive interaction potential. As an example of an attractive interaction, 
\nucl{7}{}{Li} with $a = -1.45$~nm can serve.
Note that the parameter $a$ can be tuned via Feshbach resonance.

In this work the following assumptions will be made. 
We assume that the magnetic trap is axisymmetric  ($\la_{tr} = 1$). 
Thus only two-dimensional wave functions
of the form $\psi = \psi(t,r, \theta)$ will be considered.
The symmetry of the trap also eliminates dependence of stability of axisymmetric vortices 
on the trap rotation. Hence, we will set ${\Om} = 0$. 
Moreover, although the model includes both attractive and repulsive
interaction interparticle potential (sign of the nonlinear
term), for simplicity  only the more interesting case of repulsive potential will be considered here
(some results concerning stability in transition between repulsive and attractive potential 
can be found in \cite{Pu}).

To nondimensionalize the equation \eq{GP} we use the following scaling
$$
t = (1/ \om_{tr}) \, t'\, , \qquad
x = \sqrt{{\hbar}\slash {M\om_{tr}}}\,  x'\, , \quad
\psi = \sqrt{\hbar \om_{tr}\slash {|g|}} \, \psi' \, .
$$
The Gross-Pitaevskii equation is then expressed (dropping the primes) as 
\begin{equation}
i \psi_t    =  -\frac 12 \laplace \psi + \frac12
r^2\psi+   |\psi |^2 \psi 
\label{GPfinal} 
\end{equation}
with 
\begin{equation}
\int_0^{\infty} \int_{0}^{2\pi}  |\psi|^2  r \, d\theta \, dr =   K = |g| NM/ \hbar^2\, .
\label{KK}
\end{equation}
The energy functional is given by 
\begin{equation}
E(\psi) =  \int_0^{\infty} \int_{0}^{2\pi}
\left[ \frac 12 |\grad \psi|^2
+\left( \frac 12 r^2 +  |\psi|^2 \right)|\psi|^2 \right] \, r d\theta \, dr \, .
\label{Escal1}
\end{equation}

Note that the Thomas-Fermi regime \cite{FS,LS} 
$Na / d_0 >> 1$ (here $d_0$ is the mean oscillator length,
$d_0 = \sqrt{\hbar/M \om_0}$ and $\om_0$ is the mean trap frequency 
$\om_0^3 = \om_x \om_y \om_z$) 
corresponds to $K \cto \infty$ since $K = 2|a|N \sqrt{2\pi M \om_z /
\hbar}$, i.e.,
\begin{equation}
K = \frac{N|a|}{d_0} 2 \sqrt{2\pi} \left(\frac{\om_{tr}}{\om_z} \right)^{1/3}\, .
\label{TFM}
\end{equation}
This is the limit under which Lieb and Seiringer \cite{LS} 
justified the Gross-Pitaevskii energy functional to be a good approximation for the $N$-body 
quantum system.
Note, that the only free parameter which stays in \eq{GPfinal}--\eq{KK} is 
$K$, the $L^2$-norm of the wave function $\psi$.

%%%%%%%%%%%%%%%%%%%%%%%%%%%%%%%%%%%%%%%%%%%%%%%%%%%%%%%%%%%%%%%%%%%%%%%%%%%%%

\section{\label{sec:level1-vortex}%
Vortex Solutions}
In this section we describe the structure of vortex solutions \cite{Neu}
to the Gross-Pitaevskii equation \eq{GPfinal},
of the form
\begin{equation}
\psi(t,r,\theta) = e^{-i\mu t}e^{im\theta} w(r),
\label{vortexS}
\end{equation}
and describe the numerical procedure 
which allows us to approximate them with high precision. 
Here $m$ is an integer, and in this paper it suffices 
to always assume that $m\ge1$ due to reflection symmetry. 

The radial profile function $w(r)$ of a vortex solution satisfies 
the equation
\begin{equation}
-w_{rr} - \frac 1r w_r + \frac{m^2}{r^2} w + r^2 w  
+ 2 |w|^2 w - 2\mu w = 0 \, , \quad r>0.
\label{GPradial}
\end{equation}
We will require that the radial profiles be non-negative and 
spatially localized, i.e., satisfy the boundary conditions
\begin{equation}
w(r) \ \mbox{is bounded as $r\cto 0^+$,}\quad
w(r) \cto 0^+  \ \mbox{as $r \cto \infty$.}
\label{BC}
\end{equation}
We note that the boundary condition as 
$r \cto 0^+$ implies $w(r) \cto 0^+$ for $m \ge 1$. 
 
{\em Existence.}
The existence of positive solutions to \eq{GPradial} 
for some $\mu$, corresponding to any given $K>0$ in \eq{KK},
can be proved using a well-known variational argument \cite{S}.
One minimizes \eq{Escal1} among functions with the spatial
dependence in \eq{vortexS}, with $m$ and $K$ fixed,
and a minimizer may be found that is positive.
For any positive solution of \eq{GPradial} with finite energy,
necessarily $\mu>m+1$, since multiplying \eq{GPradial} by $2\pi rw$
and integrating in $r$ yields
\begin{equation}
\mu K > \pi\int_0^\infty
\left(w_r^2 + 
\left( 
\frac{m^2}{r^2} +r^2\right)w^2\right) r\,dr \ge (m+1)K.
\label{Mestb}
\end{equation}
The last inequality follows since the integral
is minimized at a positive solution of 
\begin{equation}
-w_{rr} - \frac1r w_r + \frac{m^2}{r^2} w + r^2 w - 2\hat\mu w = 0 \, ,
%w_{rr} + \frac1r w_r - \frac{m^2}{r^2} w + 2\mu w - r^2 w = 0 \, ,
\label{GPlin}
\end{equation}
which is \eq{GPradial} linearized at zero. Analysis of this equation
(see below) yields $\hat\mu=m+1$, $w=c w^{(m)}_0$ from \eq{e.Lag} below,
where $c$ is constant.
   
The following proposition describes a global bound on 
any vortex solution with positive profile.  
A proof can be found in \cite{thesis} (except for the statement that
$\mu>m+1$).  The bound \eq{Mestb} was also proved in \cite{S}. 

\begin{figure}[t]
\centering
\includegraphics[scale=0.6]{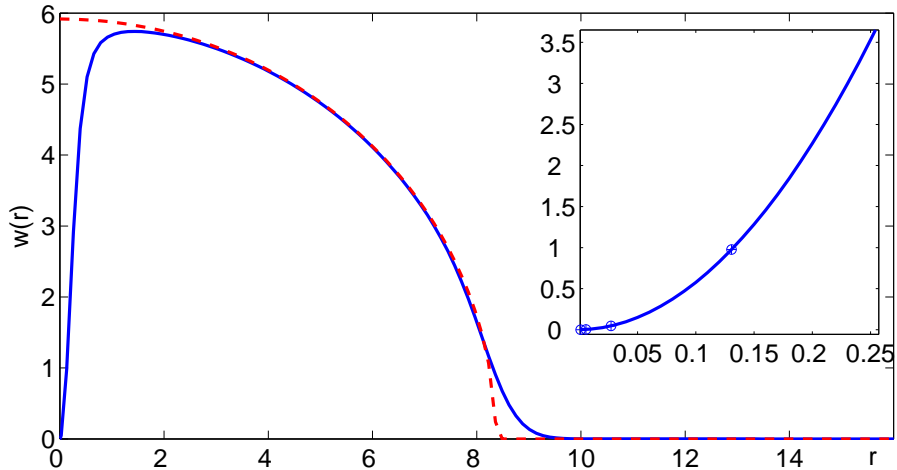}
\caption[The radial vortex profile, $m = 2$, $\mu \approx 35$]{
\label{fig:m2p35} 
The radial vortex profile of a multi-quantized vortex, $m = 2$, 
for the dimensionless parameter $\mu \approx 35$ corresponding to $N \approx 10^6$ 
particles of \nucl{23}{}{Na}. The quadratic profile in the Thomas-Fermi regime for 
the same parameters is also plotted (dashed line).
Detailed (quadratic) behavior close to the origin is on the inset.}
\end{figure}

\begin{proposition}
Let $w(r)$ be a finite-energy positive solution to \eq{GPradial},
satisfying \eq{BC}, where $m > 0$ is an integer. % and $\mu > m+1$. 
Then $\mu>m+1$, 
and $w(r)$ is increasing on $(0,R)$ and decreasing on $(R,\infty)$,
for some $R \in (m/ \sqrt{2\mu}, \sqrt{2\mu})$.
Moreover, for all $r>0$ we have
\begin{equation}
|w(r)|^2 < \mu - m .
\label{Mest}
\end{equation}
\label{profile_behavior}
\end{proposition}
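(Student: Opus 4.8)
The statement that $\mu>m+1$ is already contained in \eq{Mestb}, so the plan is to take that as given and concentrate on the qualitative profile. Everything I need can be read off the identity obtained by rewriting \eq{GPradial} as
\begin{equation*}
w_{rr} + \frac1r w_r = w\,f(r), \qquad f(r) := \frac{m^2}{r^2} + r^2 + 2w^2 - 2\mu,
\end{equation*}
so that at any critical point $r$ of $w$ (where $w_r=0$) one has $w_{rr}=w(r)\,f(r)$; since $w>0$, the sign of $w_{rr}$ at a critical point is exactly the sign of $f(r)$. Write $V(r)=m^2/r^2+r^2$ for the effective potential, which is decreasing on $(0,\sqrt m)$ and increasing on $(\sqrt m,\infty)$. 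First I would record the boundary behaviour: the indicial analysis of \eq{GPradial} near $r=0$ forces $w\sim c\,r^m$ (increasing out of $0$), and the $r^2w$ term forces Gaussian decay as $r\cto\infty$ (eventually decreasing to $0$); hence $w$ attains an interior maximum and satisfies \eq{BC}.

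For the quantitative claims, at any interior maximum $R$ one has $w_r(R)=0$ and $w_{rr}(R)\le 0$, so $f(R)\le 0$, i.e. $\frac{m^2}{R^2}+R^2+2w(R)^2\le 2\mu$. Dropping the positive term $2w(R)^2$ and using $R^2>0$ gives $\frac{m^2}{R^2}+R^2<2\mu$, which separately yields $R^2<2\mu$ and $R^2>m^2/(2\mu)$, i.e. the claimed location $R\in(m/\sqrt{2\mu},\sqrt{2\mu})$. For the amplitude bound I would invoke $\frac{m^2}{R^2}+R^2\ge 2m$ (AM--GM, equality iff $R=\sqrt m$), turning $f(R)\le 0$ into $2w(R)^2\le 2\mu-2m$, hence $w(R)^2\le\mu-m$; since $R$ is the global maximum this gives $w^2\le\mu-m$ everywhere.

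To upgrade to the strict inequality \eq{Mest} I would rule out equality at the maximum. Equality $w(R)^2=\mu-m$ forces both $f(R)=0$ and $R=\sqrt m$; then $w_r(R)=w_{rr}(R)=0$, differentiating the equation gives $w_{rrr}(R)=0$ as well (because $V'(\sqrt m)=0$), and one further differentiation gives $w^{(4)}(R)=w(R)\,V''(\sqrt m)=8\,w(R)>0$. Thus $R$ would be a strict local minimum, contradicting its being the global maximum; hence $w(R)^2<\mu-m$ and \eq{Mest} holds for all $r>0$.

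The genuine obstacle is the monotonicity claim --- that $w$ has a single critical point. The clean part is the tail: for $r>\sqrt{2\mu}$ one has $f(r)>2w^2\ge 0$, so no maximum can occur there, and since $w\cto 0$ this forces $w$ to be strictly decreasing on $(\sqrt{2\mu},\infty)$. What remains is to exclude an interior minimum between two maxima inside $(0,\sqrt{2\mu})$. When such a pair lies in $(0,\sqrt m)$ this is immediate from $f'=V'+4ww_r$: on the decreasing stretch $w_r<0$ and, for $r<\sqrt m$, $V'<0$, so $f$ is strictly decreasing and cannot pass from $f\le 0$ at the maximum to $f\ge 0$ at the following minimum. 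The hard case is a max/min pair lying to the right of $\sqrt m$, where $V'>0$ competes with $4ww_r<0$ and the sign of $f'$ is indeterminate; here I expect to need a sharper tool --- for instance the linear equation satisfied by $p=w_r$, namely
\begin{equation*}
p_{rr}+\frac1r p_r-\left(\frac{1}{r^2}+f+4w^2\right)p = V'(r)\,w,
\end{equation*}
together with a Sturm/Wronskian comparison to bound the zeros of $w_r$, or exploitation of the fact that $w$ is an energy minimizer. Closing this region is the step I expect to be the crux of the argument.
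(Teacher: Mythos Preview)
The paper does not actually prove this proposition in the text; it defers the full argument to \cite{thesis} (with $\mu>m+1$ covered by \eq{Mestb}). So there is no proof in the paper to compare your approach against, and your derivation of the bounds on $R$ and of \eq{Mest} (via AM--GM plus the fourth-derivative analysis to exclude equality) is a clean, self-contained route.

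You are right that the substantive gap is the single-critical-point claim, but the missing idea is much simpler than the Sturm/Wronskian machinery you are anticipating: the potential $V(r)=m^2/r^2+r^2$ is strictly convex on $(0,\infty)$, since $V''(r)=6m^2/r^4+2>0$. Suppose $w$ had a local maximum at $r_1$, then a local minimum at $r_0$, then another local maximum at $r_2$, with $r_1<r_0<r_2$. Your critical-point analysis gives $f(r_1)\le0$, $f(r_0)\ge0$, $f(r_2)\le0$, i.e.\ $V(r_i)+2w(r_i)^2\le 2\mu\le V(r_0)+2w(r_0)^2$ for $i=1,2$. Since $w$ is strictly decreasing on $(r_1,r_0)$ and strictly increasing on $(r_0,r_2)$ (constancy on a subinterval is ruled out by the ODE), $w(r_i)>w(r_0)$, whence $V(r_0)>V(r_i)$ for both $i$. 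But strict convexity of $V$ and $r_1<r_0<r_2$ force $V(r_0)<\max\{V(r_1),V(r_2)\}$, a contradiction. This single observation replaces your case split at $\sqrt m$ and closes the ``hard'' region $(\sqrt m,\sqrt{2\mu})$ without any comparison theory.
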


The asymptotic behavior of a vortex profile can be determined directly
from \eq{GPradial} (see Fig.~\ref{fig:m2p35}).  
It is not difficult to see that as $r \cto 0^{+}$
the equation has the same character as the linear Schr{\"o}dinger
equation \eq{GPlin} (one argues as in \cite{IAWA}), and
$$
w(r) \sim d_0r^m \quad \mbox{for $m \ge 1$,}
$$
for some positive constant $d_0$. 
As $r \cto \infty$, the nonlinear term for a localized
solution becomes negligible and the linear equation \eq{GPlin}
is again a good approximation.
The proof that the positive solution $w(r)$ to \eq{GPradial} 
approaching~0 as $r \cto \infty$ satisfies 
$$
w(r) = O(r^{\mu-1} e^{-r^2/2})
$$ 
is also given in detail in \cite{thesis}.

The goal of this paper is to study spectral stability of solutions to
\eq{GPradial} both analytically and numerically.  
Naturally, for a careful numerical stability investigation 
it is crucial to obtain very precise
numerical solutions of \eq{GPradial} first.  
The approach used here is
based on  path-following along a branch bifurcating out of the trivial
solution $w = 0$ and is similar to the one used in \cite{EDCB}.  

{\em Bifurcation.} 
The bifurcation (and later stability) analysis
requires detailed information about the localized solutions to
\eq{GPlin}.  This equation has two independent general solutions ---
pro\-ducts of a polynomial,  a decaying Gaussian and a confluent
hypergeometric function.  The exact solutions (taking $\hat\mu=\mu$ henceforth) are
\begin{eqnarray*}
w_{(1)}(r) & = & r^m e^{-r^2/2} M\left( \frac{m+1-\mu}2, m+1, r^2\right)  \, ,
\\
w_{(2)}(r) & = & r^m e^{-r^2/2} U\left( \frac{m+1-\mu}2, m+1, r^2\right) \, .
\end{eqnarray*}
The confluent hypergeometric functions $M(a,b,x)$ and $U(a,b,x)$
are, in general, independent solutions to $x f'' + (b-x)f' -af = 0$ \cite{AS}.
Their asymptotics as $r \cto 0^+$ and as $r \cto \infty$ is, respectively,
\begin{eqnarray*}
w_{(1)}(r) &\sim& r^m \left(1 + O(r^2)\right) \, , \qquad
w_{(1)}(r) \sim  \displaystyle\frac{\Gamma(m+1)}{\Gamma(\frac{m+1-\mu}{2})}  
r^{-\mu - 1}e^{r^2/2} \left(1 + O(1/r^2)\right) , \\
w_{(2)}(r) &\sim& \displaystyle\frac{\Gamma(m)}{\Gamma(\frac{m+1-\mu}{2})} r^{-m} \left(1 + O(r^2)\right)\, ,\quad
w_{(2)}(r) \sim r^{\mu - 1} e^{-r^2/2} \left(1 + O(1/r^2)\right).
\end{eqnarray*}
The Wronskian of $w_{(1)}(r)$ and $w_{(2)}(r)$ is given by
\begin{equation}
W(w_{(1)}, w_{(2)}) = -\frac2r \frac{\Gamma (m+1)}
{\Gamma (\frac{m+1-\mu}2) }\, .
\label{Wrons}
\end{equation}
The only possibility for $w_{(1)}(r)$ (and similarly for $w_{(2)}(r)$) to
satisfy the boundary conditions at both ends is when the Wronskian \eq{Wrons} 
vanishes. This happens if $|\Gamma((m + 1-\mu)/2)| = \infty$, so 
$m+ 1-\mu = -2n$, $n$ a nonnegative integer.  
Therefore a non-trivial 
solution $w_n(r)$ to \eq{GPlin} approaching zero as 
$r \cto 0^+$ and as $r \cto \infty$ exists if and only if $\hat\mu = \mu_n$, where
\begin{equation} 
\mu_n  = m+1 +2n, \quad n=0,1,2,\ldots\, .
\label{spectrum}
\end{equation}

For $\mu$ given by \eq{spectrum} both solutions $w_{(1)}(r)$ and $w_{(2)}(r)$ reduce to
a constant multiple of the  single solution 
\begin{equation}\label{e.Lag}
w_n^{(m)}(r) = r^m e^{-r^2/2}L_n^{(m)}(r^2),
\end{equation}
where $L_n^{(m)}(r)$ 
is the generalized Laguerre polynomial, with $n$ the number of zeros of 
$L_n^{(m)}(r)$ for $r > 0$. The positive solution 
(the ground state of the associated energy functional) 
corresponds to $n = 0$, $\mu_0 = m+1$ and $w = w_0^{(m)}$.  

\begin{figure}[!tbp]
\centering
\includegraphics[scale=0.4]{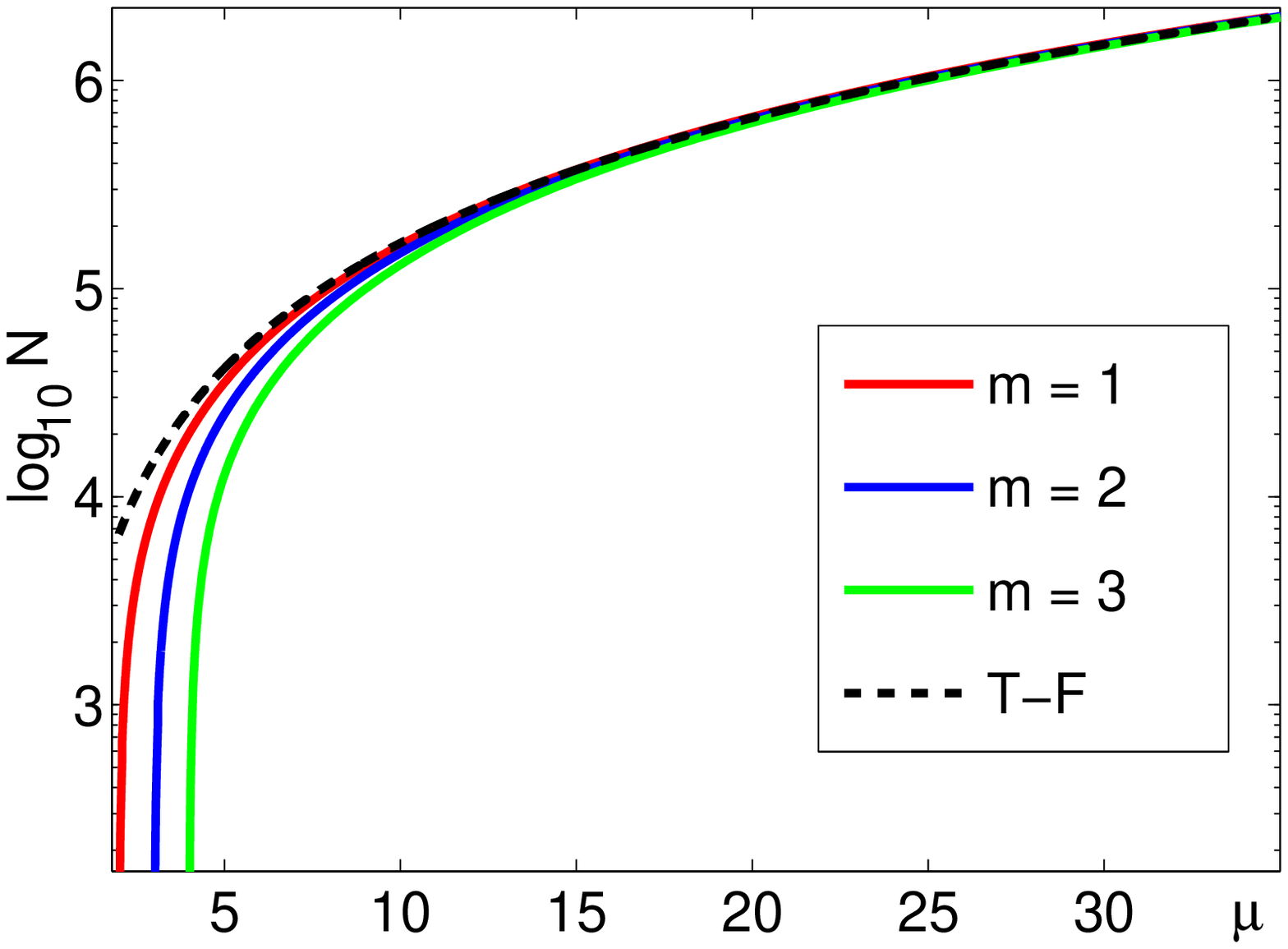}\qquad \qquad
\includegraphics[scale=0.4]{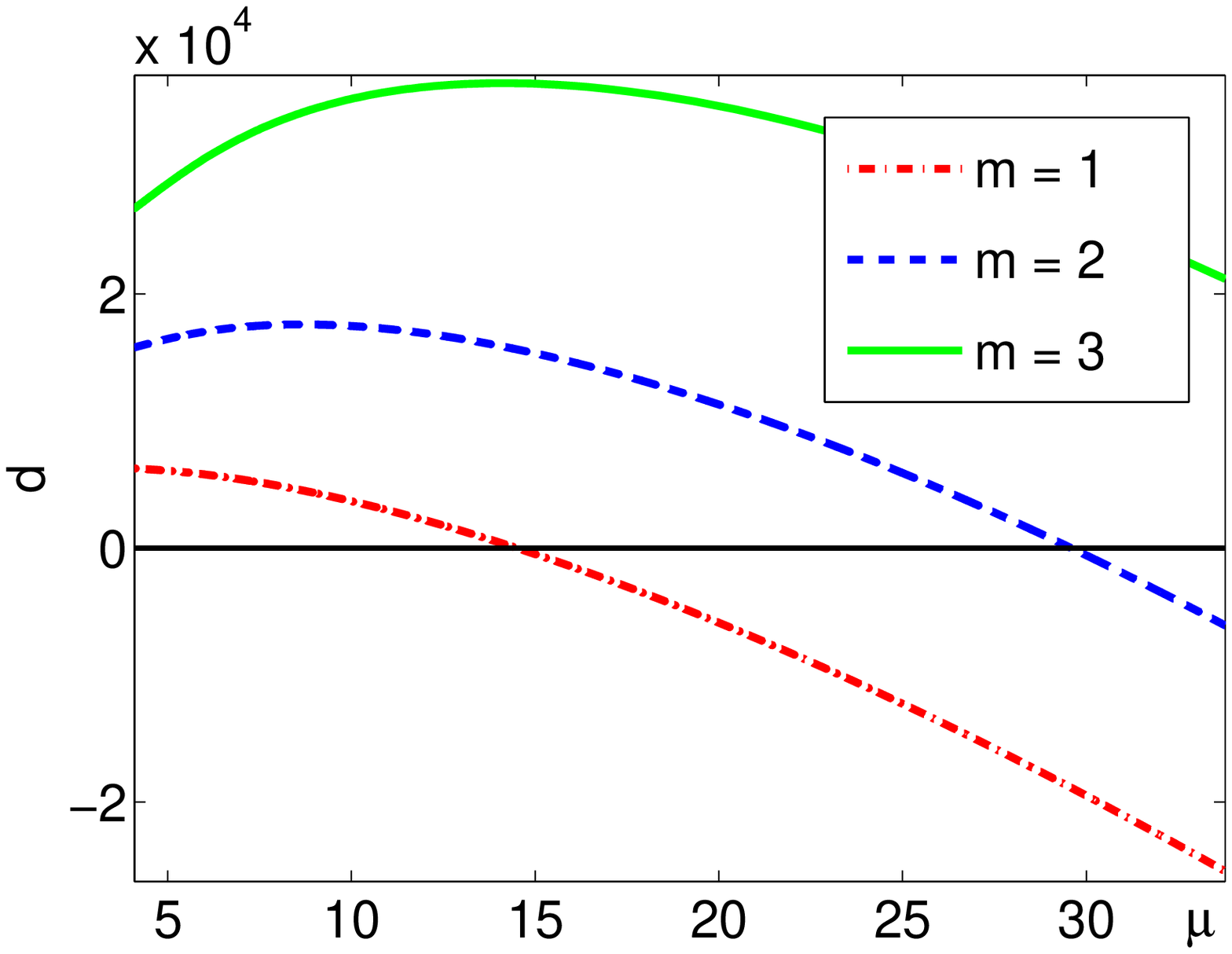}
\caption[Bifurcating branches of vortex solutions]{\label{fig:bif} 
Left panel: ($\log_{10} N$ vs. $\mu$) plot of branches of vortex solutions for $m =1, 2, 3$ 
(from left to right) emerging at  
$\mu_0 = m+1$ for \nucl{23}{}{Na} data.  Dashed curve represents the number of 
particles in the Thomas-Fermi parabolic regime 
$w_{TF}(r) = \sqrt{\mu - r^2/2}$. 
\newline
Right panel: The difference 
$d= \int |\Psi_{TF}|^2 - \left| \Psi \right|^2$ of the number of particles of 
vortex solutions  and the number of particles of $w_{TF}$
for $m = 1, 2, 3$.} 
\end{figure}

The numerical algorithm designed to find solutions to \eq{GPradial} is 
based on the following observation. 
It is reasonable to expect that introduction of the nonlinear 
term leads to the existence of a solution branch 
$(\mu(s), w_{\mu}(s))$ bifurcating from the trivial solution $w = 0$ for $\mu = \mu_0$. 
To justify such a behavior one can use the Crandall-Rabinowitz 
theorem \cite{CR,Nir}, to prove the following Theorem 
(for details see \cite{thesis}). 

\begin{theorem}
The solutions $(\mu,w)$ to \eq{GPradial} near $(\mu_0,0)$, $\mu_0 = m+1$, 
form a curve  
$$
s\mapsto (\mu(s), w(s)) = (\mu_0+\tau(s), sw_0^{(m)} + sz(s))\, ,
$$ 
where 
$s \mapsto (\tau(s), z(s)) \in \R \times \, 
\mbox{\rm span}\,  \left\{w_0^{(m)} \right\}^{\perp}$ is continuously differentiable 
near $s=0$, $\tau(0) = \tau'(0) = 0$, $z(0) = 0$ and 
\begin{eqnarray*}
&& w(s) \in X  =  \{ w :
e^{im\tht} w(r) \in \HHH\} \, ,
\\
&& \HHH  =  \{ u: u\in H^2(\R^2,\R^2), 
(x^2+y^2) u \in L^2(\R^2,\R^2) \} \, .
\end{eqnarray*}
\label{textbiftheorem}
\end{theorem}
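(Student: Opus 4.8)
The plan is to apply the Crandall--Rabinowitz bifurcation theorem \cite{CR,Nir} to the map
$$
F(\mu,w) = -w_{rr} - \tfrac1r w_r + \tfrac{m^2}{r^2} w + r^2 w + 2w^3 - 2\mu w,
$$
regarded as a function $F:\R\times X_\R \to Y$, where $X_\R$ is the real subspace of $X$ (real-valued profiles, since we seek a positive $w$) and $Y$ is the weighted $L^2$ space of radial functions with the measure $r\,dr$ inherited from $\R^2$. First I would record that $F(\mu,0)=0$ for every $\mu$, so the trivial branch is present, and compute the partial Fr\'echet derivative $D_wF(\mu_0,0) = L_{\mu_0}$, where $L_\mu w := -w_{rr}-\tfrac1r w_r + \tfrac{m^2}{r^2}w + r^2 w - 2\mu w$ is exactly the operator of the linearized equation \eq{GPlin}; the cubic term $2w^3$ contributes nothing at $w=0$ since its derivative $6w^2$ vanishes there.

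Next I would verify the three structural hypotheses at $(\mu_0,0)$, $\mu_0=m+1$. \emph{Simple kernel:} the analysis leading to \eq{spectrum} shows that among solutions of the second-order ODE $L_{\mu_0}w=0$ only a one-dimensional family satisfies the boundary conditions \eq{BC}, spanned by $w_0^{(m)}$ of \eq{e.Lag}, so $\Ker D_wF(\mu_0,0)=\spanv\{w_0^{(m)}\}$. \emph{Range of codimension one:} since $L_{\mu_0}$ is the angular-momentum-$m$ reduction of the two-dimensional harmonic-oscillator Schr\"odinger operator, it is self-adjoint on $Y$ with compact resolvent, hence $0$ is an isolated simple eigenvalue, $L_{\mu_0}$ is Fredholm of index $0$, and by self-adjointness its range is the closed subspace $\{w_0^{(m)}\}^\perp$. \emph{Transversality:} here $\partial_\mu D_wF(\mu_0,0)=-2\,\mathrm{Id}$, so $\partial_\mu D_wF(\mu_0,0)\,w_0^{(m)}=-2w_0^{(m)}$ is \emph{not} in $\{w_0^{(m)}\}^\perp$, because $\langle -2w_0^{(m)},w_0^{(m)}\rangle=-2\|w_0^{(m)}\|^2\neq0$. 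Granting these, Crandall--Rabinowitz produces exactly the asserted $C^1$ curve $s\mapsto(\mu_0+\tau(s),\,sw_0^{(m)}+sz(s))$ with $z(s)\perp w_0^{(m)}$, $z(0)=0$, $\tau(0)=0$, locally exhausting all nontrivial solutions.

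The delicate point, which I would treat as the main obstacle, is to make $F$ a genuinely well-defined and continuously differentiable map between these spaces so that the abstract theorem applies. This needs the correct domain: the weighted space $\HHH$ (with $(x^2+y^2)u\in L^2$) is adapted to the growing potential $r^2$ and to the compact-resolvent/Fredholm property of $L_{\mu_0}$, while the singular coefficient $m^2/r^2$ and the origin behavior are controlled by the $H^2$ regularity together with the $r^m$ vanishing forced by \eq{BC}. For the nonlinearity one invokes the two-dimensional embedding $H^2(\R^2)\hookrightarrow L^\infty\cap C^0$, which guarantees that $w\mapsto 2w^3$ maps $X_\R$ into $Y$ smoothly with derivative $6w^2$; checking these mapping and smoothness properties, and that $L_{\mu_0}$ really is Fredholm of index $0$ on this domain, is the functional-analytic heart of the argument.

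Finally I would establish $\tau'(0)=0$, which is not automatic in Crandall--Rabinowitz but follows here from the oddness of the cubic. Substituting $w=s(w_0^{(m)}+z)$ and $\mu=\mu_0+\tau$ into $F=0$ and projecting onto $w_0^{(m)}$ (the Lyapunov--Schmidt reduced equation) gives, after cancelling one power of $s$, the scalar relation $-2\tau\|w_0^{(m)}\|^2 + 2s^2\langle (w_0^{(m)}+z)^3,w_0^{(m)}\rangle=0$, the term $\langle L_{\mu_0}z,w_0^{(m)}\rangle$ dropping out by self-adjointness. Hence $\tau(s)=s^2\langle (w_0^{(m)}+z(s))^3,w_0^{(m)}\rangle/\|w_0^{(m)}\|^2=O(s^2)$, so $\tau(0)=\tau'(0)=0$, and in fact $\tau''(0)/2=\int(w_0^{(m)})^4 r\,dr/\int(w_0^{(m)})^2 r\,dr>0$, identifying a supercritical pitchfork.
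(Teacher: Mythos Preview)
Your proposal is correct and follows precisely the approach the paper indicates: the paper does not give a proof in the text but simply states that the result follows from the Crandall--Rabinowitz theorem \cite{CR,Nir}, deferring details to \cite{thesis}. You have correctly supplied those details---the verification of the simple-kernel, codimension-one range, and transversality hypotheses, together with the Lyapunov--Schmidt computation showing $\tau'(0)=0$ from the odd cubic nonlinearity---and your identification of the functional-analytic issues (well-definedness and smoothness of $F$ on the weighted spaces, Fredholm property of $L_{\mu_0}$) matches what one would expect the cited thesis to contain.
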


Note that in addition to a branch of positive solutions $(\mu, w)$ 
bifurcating from the trivial solution at $\mu = \mu_0 = m+1$ in a direction $(0, w_0^{(m)})$ 
there are sign-changing branches of solutions $(\mu(s), w_n^{(m)}(s))$, $n = 1, 2, \dots$, 
bifurcating from the trivial solution at $\mu = \mu_n = m+1 + 2n$ in a direction $(0,w_n^{(m)})$. 
The proof is analogous.

{\em Numerics.} 
Hence it is possible to numerically trace the solution curve 
$(\mu,w)$ from the branching point $(\mu_0,0)$, see Fig.~\ref{fig:bif}(a).
The typical radial profile for $m=2$ is on Fig.~\ref{fig:m2p35}. 
Note that the Crandall-Rabinowitz bifurcation theory also provides
some linear stability information for solution branches given in
Theorem~\ref{textbiftheorem} \cite{CR, Nir}, but one must take the
stability results with caution. 
Although the theory predicts stability for the bifurcating branch, it
is only with respect to radially symmetric perturbations in
\eq{GPlin}. This is not sufficient to determine stability of vortex
solutions with respect to the full dynamics in \eq{GPfinal}. 

The behavior of norms relative to the norm of the parabolic Thomas-Fermi 
regime approximation is illustrated in Fig.~\ref{fig:bif}(b).
The first point on the approximate solution curve is set to be 
$(\mu_0, \eps w_0^{(m)})$,  $\eps=0.1$,
which is an $O(\eps^2)$ 
approximation of the exact solution. 
Then an implementation of a predictor-corrector path-following algorithm \cite{AG} 
is used to get solutions for large values of the parameter $\mu$. 

Since our stability study will require evaluations 
of the profile at any given point
within the computational domain, the precision of calculations 
is improved by optimizing the already calculated profile for any given $\mu$ 
by a multiple shooting procedure \cite{MSH}. 
This allows one to achieve high precision 
in evaluating $w(r)$ by simple integration from a nearby 
mesh point. Note that the calculation is almost
independent of the size of the parameter $\mu$ since 
the size of the computational domain, and so the number of necessary nodes, 
grows very slowly.  Therefore it is possible to reach large values of
$\mu$. Also note, that with the growing parameter $\mu$, the 
$L^2$-norm \eq{TFM} of profiles grows (Fig.~\ref{fig:bif})
and hence states far in the physically interesting Thomas-Fermi regime
for a wide range
of $\mu$'s are obtained for a small computational cost. 
On the other hand, as pointed in \cite{EDCB}, for computation for a single value of $\mu$ this
method has significant overhead. 

%%%%%%%%%%%%%%%%%%%%%%%%%%%%%%%%%%%%%%%%%%%%%%

\section{Linearization and Reduction to Ordinary Differential Equations}
\label{sec:level1-stability}%

The goal of this section is to derive and study the linearization of 
\eq{GPradial} around the solutions constructed in the previous section
--- localized vortex profiles. 
The linearized equations have the same form as the so-called
Bogoliubov equations \cite{EDCB, Pethick} commonly used in physics
literature.  Note, however, that the relation between the derivation
here and the physical derivation of the Bogoliubov equations is
by no means straightforward. 

A small general perturbation of the vortex solution 
$\psi (t,r,\theta) = e^{i(-\mu t + m\theta)} w(r)$, 
where $w(r) = w_\mu(r)$ for a fixed parameter 
$\mu$, has the form 
$$
u(t,r,\theta) = e^{-i \mu t} \left( e^{im\theta} w(r) +
\eps v(t,r,\theta)\right)  \, .
$$
Neglecting nonlinear terms in \eq{GPfinal} yields 
\begin{equation}
i v_t = - \frac 12 \laplace v  - \mu v + 
\frac 12 r^2 v + 2 |w|^2 v +|w|^2 e^{2im \theta} \vbar \, .
\label{pertGP}
\end{equation}
The complex character of the equation \eq{pertGP} complicates the analysis. 
Therefore, we decompose the complex wave function $v$ as
$$
\Phi = 
\left(
\begin{matrix} 
\Phi_1 \\ 
\Phi_2 
\end{matrix}
\right)
= 
\left(
\begin{matrix}
\mbox{Re } v \\
\mbox{Im } v
\end{matrix}
\right)\, .
$$
The equation \eq{pertGP} is then equivalent to the real system
\begin{equation}
\del_t \Phi = 
A \Phi = 
J \left[
\frac 12 \laplace - \left( \frac 12 r^2 -\mu + 2|w|^2 
\right)
- |w|^2 e^{2m\theta J} R 
\right] \Phi \, ,
\label{linpGPvec}
\end{equation}
where
$$
J = 
\left(
\begin{matrix}
0 & -1 \\
1 & 0 
\end{matrix}
\right)
\qquad
\mbox{and}
\qquad
R = 
\left(
\begin{matrix}
1 & 0 \\
0 & -1 
\end{matrix}
\right) \, .
$$

To understand the dynamic stability of the vortex solution $\psi$, we
will study the spectrum of the operator $A$ as an unbounded operator
on $L^2(\R^2, \R^2)$ with an appropriate domain $D(A)$. 
As is rather well-known, spectral stability of $A$ 
(meaning absence of spectrum in the right half of the complex plane),
need not necessarily imply linear stability 
(in the sense that the zero solution of \eq{linpGPvec} is stable), 
nor nonlinear stability of vortices.  In this paper, we avoid these subtle
issues and confine ourselves to studying spectral stability. After all, the
presence or absence of eigenvalues with positive real part is interesting 
in itself.

The precise definition of the operator $A$ is somewhat involved and
requires the concept of a quadratic form \cite{SR1}. 
Write (only formally for now)
\begin{equation}
L_c = -\frac 12 \laplace + \frac 12 r^2 I\, ,
\qquad
L_w = 2|w|^2 I + |w|^2e^{2m\tht J} R -\mu \, ,
\label{Lcwdef}
\end{equation}
where $I$ is the $2\times 2$ identity matrix. Then 
\begin{equation}
A =  - J (L_c + L_w)\, .
\label{Adec}
\end{equation}
Then define a quadratic map 
$$
q_{Lc}: D(q) = \left(H^1(\R^2, \R^2) \cap L^2(\R^2, \R^2; r^2 )\right)
 \cto \CC
$$
by 
$$
q_{Lc}(\Psi,\Psi) = 
\int_0^{2\pi} \int_0^{\infty} 
\left[ \frac 12|\grad \Psi|^2  + \frac{r^2}2 |\Psi|^2 
\right] \, r\, dr\, d\tht
\, .
$$
The quadratic form $q_{Lc}$ 
is semibounded:
$$
q_{Lc} (\Psi, \Psi) \ge 0 \, .
$$
Note that the space $D(q) = H^1(\R^2, \R^2) \cap L^2(\R^2, \R^2; r^2 )$ 
is dense in $L^2(\R^2, \R^2)$ 
since the Schwartz space is a subset of $D(q)$ and is dense 
in $L^2(\R^2, \R^2)$. 
Here $L^2(\R^2, \R^2; f(r) )$ represents the space of functions 
$\Psi: \R^2 \cto \R^2$ with the bounded norm 
$$
\|\Psi\|^2_{L^2(\R^2, \R^2; f(r) )} = \int_0^{2\pi}\int_0^{\infty} 
|\Psi(r)|^2 f(r) \,r\, dr\, d\tht \, .
$$

The quadratic form $q_{Lc}$ is closed if it has 
a closed graph, i.e., if $D(q)$ is complete under 
the graph norm 
$\|\Psi\|_{+1} = \sqrt{q_{Lc}(\Psi, \Psi) + \|\Psi\|^2_{L^2}}$. 
This is true since both $H^1$ and $L^2(r^2 )$
can be obtained from $L^2$ by completing the space of $C^{\infty}_0$ 
 functions under the $H^1$ and $L^2(r^2 )$ norms 
respectively. Also observe that 
$$
C \left( \|\Psi\|_{H^1}^2 + \|\Psi\|^2_{L^2(r^2 )} \right)  
\ge q_{Lc} (\Psi,\Psi)  + \|\Psi\|^2_{L^2}
\ge c \left( \|\Psi\|_{H^1}^2 + \|\Psi\|^2_{L^2(r^2 )} \right)
$$
for some $C > c > 0$. (The lower bound follows from the semiboudedness; 
the proof of the upper bound is analogous.) 
Then Theorem VIII.15, pp.~278 of~\cite{SR1} yields that
$q_{Lc}$ is the quadratic form of a unique self-adjoint operator $L_c$. 
The domain of the operator $L_c$, denoted by  $D(L_c)$, is dense in $L^2$. 
Clearly 
$H^2(\R^2, \R^2) \cap L^2(\R^2, \R^2; r^2 ) \subset D(L_c)$,
and we have
$$
L_c \Psi = \left(-\frac 12 \laplace + \frac 12 r^2 I 
 \right) \Psi \, ,
$$
for $\Psi \in D(L_c)$ in the sense of distributions. 

It is easy to see that the operator $L_w$ in \eq{Lcwdef}
is bounded on $L^2(\R^2, \R^2)$
Therefore the operator $A = -J(L_c + L_w)$ with the domain 
$D(A) = D(L_c)$ is closed and densely defined in $L^2(\R^2, \R^2)$. 

\subsection{Essential spectrum}
We investigate the spectrum $\sigma(A)$ of the operator $A$ given by \eq{Adec}, regarded
as an unbounded operator on complexified space $D(A) \subset
L^2(\R^2, \CC^2)$.  The spectrum of such an operator in general
consists of two parts: isolated eigenvalues of finite multiplicity
form the discrete spectrum $\sigdisc(A)$, and the remaining part ---
the essential spectrum $\sigess(A)$. The latter is empty as stated in
the next proposition.  

\begin{proposition}
For any $m$ and $\mu$  the 
spectrum of $A$ consists entirely of isolated eigenvalues
of finite multiplicity.  I.e., the essential spectrum 
of the operator $A$ is empty. 
\label{essential}
\end{proposition}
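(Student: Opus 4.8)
The plan is to prove the stronger statement that $A$ has \emph{compact resolvent}; the asserted spectral structure then follows from a standard theorem, since an operator with compact resolvent has spectrum consisting entirely of isolated eigenvalues of finite multiplicity, and hence $\sigess(A)=\emptyset$. The whole strategy rests on the observation that the confining harmonic potential makes the free part $L_c$ of \eq{Adec} a very well-behaved operator, while the remaining contributions are only bounded perturbations that cannot create essential spectrum.

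First I would establish that the self-adjoint operator $L_c=-\frac12\laplace+\frac12 r^2 I$ of \eq{Lcwdef} has compact resolvent. This is the (componentwise) two-dimensional quantum harmonic oscillator, whose spectrum is purely discrete. One may argue in either of two ways. The explicit route: the Laguerre--Gaussian functions $w_n^{(m)}$ of \eq{e.Lag}, taken over all $m\in\Z$ and $n\ge0$, form a complete orthonormal eigenbasis with eigenvalues $|m|+1+2n\to\infty$, so $L_c^{-1}$ is a compact (diagonal, summable-inverse) operator, and the lowest eigenvalue equals $1$, whence $0\notin\sigma(L_c)$. The more robust route: prove that the form domain $D(q)=H^1(\R^2,\R^2)\cap L^2(\R^2,\R^2;r^2)$ embeds compactly into $L^2(\R^2,\R^2)$ by a Rellich-type argument, in which a set bounded in the graph norm $\|\cdot\|_{+1}$ is locally precompact by Rellich--Kondrachov, while the weight $\int r^2|\Psi|^2$ controls the $L^2$ mass at infinity uniformly and thereby rules out escape of mass.

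Next I would transfer this to $A$. Since $J$ is bounded and boundedly invertible with $J^{-1}=-J$, the operator $A_0:=-JL_c$ is closed on $D(A_0)=D(L_c)$ and invertible, with $A_0^{-1}=L_c^{-1}J$ (indeed $-JL_c\,(L_c^{-1}J)=-J^2=I$). As the product of the compact operator $L_c^{-1}$ and the bounded operator $J$, this inverse is compact, so $A_0$ has compact resolvent. Finally, by the remark recorded just before the proposition, $L_w$ is bounded on $L^2(\R^2,\R^2)$, hence so is $-JL_w$, and $A=A_0-JL_w$ on $D(A)=D(L_c)$. A bounded perturbation of an operator with compact resolvent again has compact resolvent: for $z\in\rho(A_0)$ with $\|JL_w(A_0-z)^{-1}\|<1$ one has $(A-z)^{-1}=(A_0-z)^{-1}(I-JL_w(A_0-z)^{-1})^{-1}$, a compact operator times a bounded one. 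Thus $A$ has compact resolvent, and the conclusion follows.

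The main obstacle is the first step: the compact embedding $D(q)\hookrightarrow L^2$, equivalently compactness of the resolvent of $L_c$. Everything afterward is routine precisely because $J$ is boundedly invertible and $L_w$ is merely bounded, so neither the non-self-adjoint twist by $J$ nor the potential term $L_w$ can introduce essential spectrum. The one point requiring care is that the harmonic confinement, acting through the genuine $L^2(r^2)$ weight rather than through a bounded potential, is what prevents loss of compactness at spatial infinity; this is exactly where the confining nature of the trap is used.
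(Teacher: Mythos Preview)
Your proposal is correct and follows essentially the same approach as the paper: both arguments establish that $L_c$ has compact resolvent (as the 2D harmonic oscillator), transfer this to $-JL_c$ via the bounded invertibility of $J$, and then absorb the bounded perturbation $-JL_w$. Your presentation is in fact somewhat more direct---you compute $(-JL_c)^{-1}=L_c^{-1}J$ outright and use a Neumann-series argument for the perturbation, whereas the paper reaches the same conclusions through a factoring identity, analytic Fredholm theory, and the generalized Weyl theorem; the one small point you leave implicit (existence of $z$ with $\|JL_w(A_0-z)^{-1}\|<1$) is immediate since $A_0=-JL_c$ is normal with purely imaginary spectrum bounded away from $0$, so its resolvent norm decays along the real axis.
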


\begin{proof}
The proof has three steps. First, it is easy to see
that the essential spectrum of $L_c$ is empty by 
Theorem XIII.16 on p.~120 of Reed and Simon~\cite{SR}.
Then we prove the same for $JL_c$.  Finally, the generalized Weyl theorem for
non-self adjoint operators yields the same for $JL$. 

Let us prove that the essential spectrum of $JL_c$ is empty. Since
$L_c$ is a non-negative operator, $0$ is not an eigenvalue and therefore
$L_c$ has bounded inverse. Moreover, $L_c$ only has discrete spectrum and its
eigenvalues are isolated with the only possible accumulation point
$\infty$. Then by Theorem XIII.64 on p.~245 of \cite{SR} 
the operator $L_c^{-1}$ is compact.
Now consider the following identity:
\begin{equation}
\la I  - JL_c = (\la L_c^{-1}J^{-1} - I ) JL_c \, .
\label{Lcinv}
\end{equation}
If $\la \notin \sigdisc(JL_c)$, then $\frac 1{\la} \notin
\sigdisc(L_c^{-1}J^{-1})$ and
the right-hand side of \eq{Lcinv} has bounded inverse given by
$$
(\la I  - JL_c)^{-1} = L_c^{-1}J^{-1} (\la L_c^{-1}J^{-1} - I )^{-1} \, .
$$
Here $L_c^{-1}$ is compact, $J^{-1} = -J$ is a bounded operator, $\la
L_c^{-1}J^{-1}- I$ is a compact perturbation of identity, a
Fredholm operator. Moreover, since $\frac{1}{\la} \notin \sigdisc
(L_c^{-1}J^{-1})$, the operator $\la L_c^{-1}J^{-1}- I$ has empty
kernel and is invertible and bounded. Therefore $(\la I - JL_c)^{-1}$
is compact. This implies that $\la I - JL_c$ is invertible with a
compact inverse if $\la$ is not an eigenvalue of $JL_c$. 

It remains to prove that the eigenvalues of $JL_c$ are isolated and of
finite multiplicity, which prohibits discrete spectra to be embedded
in the essential spectrum. 
To show that, consider the resolvent equation 
$$
(I\la - JL_c) u = f \, .
$$
which is equivalent to 
$$
-(I-T(\la)) u = \left(\la L_c^{-1}J^{-1} -  I\right) u = L_c^{-1}J^{-1}
f \, .
$$
The operator $\la L_c^{-1}J^{-1} - I$ is a (multiple of) compact
perturbation of identity and therefore it is also Fredholm.
Also, it is analytic everywhere except for the discrete spectra of $L_c^{-1}J^{-1}$.
By a general result of Gohberg and Krein \cite{KG}, p.21. or 
Kato \cite{Kato}, p.370, the set of values for which $I - T(\la)$ is
not invertible
is at most countable with their only possible accumulation point
infinity.
Therefore the eigenvalues of $JL_c$ are isolated. Also, the
spectral projection on the eigenspace associated with a particular
eigenvalue of $JL_c$ has finite dimensional range, since it is given by
an integral
$$
P_{\la} = \frac{1}{2\pi i} \int_{\Gamma} (\la I - JL_c)^{-1} \, d\la
$$
of a compact operator. Hence the essential spectrum of $JL_c$ is empty
and consist of isolated eigenvalues of finite multiplicity with only
accumulation point infinity, i.e., 
$$
\sigess (JL_c) = \emptyset \, .
$$
 
Finally, we use the generalization of Weyl's theorem to 
non-self-adjoint operators to prove 
$$
\sigess (JL_c) = \sigess (J(L_c + L_w))\, .
$$
It is enough to observe that $J(L_c+L_w)$ is a relatively compact
perturbation of $JL_c$, i.e., that $JL_w(\la I - JL_c)^{-1}$ is compact
whenever $\la \notin \sigdisc(JL_c)$. 
This is true since $JL_w$ is bounded and $(\la I - JL_c)^{-1}$ is compact. 
\end{proof}

Let us point out that although the stability of an axisymmetric vortex in an axisymmetric trap 
does not depend on the trap rotation frequency ${\Om}$, 
it has an influence on the linearized operator $A$. 
The definition of the operator and the proof of emptiness of 
the essential spectra can be adjusted to 
account for the rotation as long as $|{\Om}| \le \om_{tr}$. 
Beyond this threshold it is not clear how to define the operator  and 
whether its essential spectrum stays empty in this parameter regime. 
This threshold may be significant if the axial symmetry is broken. 
A further discussion on other features in this regime can be found in Chapter~12 \cite{KFC}. 
On the other hand, it is easy to see that the eigenvalues (the discrete spectrum) 
of the linearized problem suffer only a shift by a purely imaginary number 
depending on rotation, so stability of this part of the spectrum 
of $A$ is unaffected by  rotation of the trap. 

\subsection{Eigenvalues}
By the result above, the investigation  of the spectrum of the operator $A$
is reduced to study of the eigenvalue equation
\begin{equation}
A\Phi = \la \Phi \, ,
\label{egn}
\end{equation}
which can be rewritten as
\begin{equation}
\left[
\la J + \frac 12 \laplace - \left( \frac 12 r^2 - \mu - 2 |w|^2 \right)
\right] \Phi 
+ |w|^2 e^{2m\theta J} R \Phi = 0 \, .
\label{egn2}
\end{equation}
Similarly as in \cite{PW} 
it is useful to represent solutions of \eq{egn2} in the basis of the
eigenvectors of the matrix $J$:
$$
\Phi =
\Phi_+ 
\left(
\begin{matrix}
1/2 \\
-i/2
\end{matrix}
\right) +
\Phi_-
\left(
\begin{matrix}
1/2  \\
i/2
\end{matrix}
\right)\, .
$$
Since $D(A) \subset L^2(\R^2, \CC^2)$, then $\Phi_{\pm} \in L^2(\R^2, \CC)$. 
Consequently, \eq{egn2}  has the form 
\begin{eqnarray}
\left( 
i \la + \frac 12 \laplace -  \frac12 r^2 +\mu - 2|w|^2  
\right) \Phi_+  -
|w|^2 e^{2im\theta} \Phi_- & = & 0\, , 
\label{bog1} \\
\left(
- i \la + \frac 12 \laplace  - \frac12 r^2 +\mu - 2|w|^2 \right) 
\Phi_-   -
|w|^2 e^{-2im\theta} \Phi_+ & = & 0\, .
\label{bog2}
\end{eqnarray}
Using the information on asymptotic decay and a simple bootstrap argument one can deduce 
that $\Phi_{\pm} \in H_{loc}^{k}$ for each $k > 0$, so 
$\Phi_{\pm} \in C^{\infty} (\R^2, \CC) \cap L^2(\R^2, \CC)$.

Furthermore, decompose $\Phi_{\pm}
(r,\theta)$ at each fixed $r$ into Fourier modes (with shifted indices
for notational ease)
\begin{equation}
\Phi_{\pm} (r, \theta) = 
\sum_{j = -\infty}^{\infty} 
e^{i(j\pm m) \theta}
y_{\pm}^{(j\pm m)} (r) \, .
\end{equation}
After the introduction of the Fourier modes the equation \eq{egn2} 
transforms to an infinite-dimensional system of linear equations.

The system decouples to coupled pairs
for nodes $y_+ = \yp, y_- = \ym$ 
\begin{eqnarray}
\left[
i \la + \frac 12 \laplace_r - \frac{(j+m)^2}{2r^2} 
- \frac 12 r^2 + \mu - 2 |w|^2 \right] y_+
& = & |w|^2y_-  \, ,
\label{Ff1} \\
\left[
- i \la + \frac 12 \laplace_r - \frac{(j-m)^2}{2r^2}  - \frac12 r^2 +  \mu 
- 2 |w|^2 \right] y_- & = &  |w|^2y_+   \, .
\label{Ff2}
\end{eqnarray}
By the symbol $\laplace_r$ we denote the radial Laplace operator 
$\laplace_r = \frac{\del^2}{\del r^2} + \frac 1r \frac
{\del}{\del r}$. 

The proper boundary conditions for this system must be determined only from the system itself and
the property $\Phi_{\pm} \in L^2(\R^2, \CC)$. Here asymptotic behavior of solutions to
\eq{Ff1}--\eq{Ff2} described in Theorem~\ref{abt} of the next Section can be used. It implies 
that the appropriate boundary conditions are
\begin{equation}
\lim_{r\cto 0^+}  y_{\pm}  (r) \quad
\mbox{exists} \qquad \mbox{and} \qquad 
\lim_{r\to \infty} y_{\pm} (r) = 0 \, .
\label{BCfourier}
\end{equation}

Therefore the eigenproblem for $A$ is decomposed into countable many problems
\begin{equation}
L_j y   = i \la R y\, ,
\qquad 
y = \left(y_+, y_- \right)^T  \, ,
\label{egnpair}
\end{equation}
where
$$
L_j = 
\left(
\begin{matrix}
 L_j^+   & 0 \\
 0 & L_j^-  
\end{matrix}
\right)
+ |w|^2
\left( \begin{matrix} 
2 & 1 \\ 1 & 2 
\end{matrix} \right)\, ,
$$
and
\begin{equation}
L_j^{\pm}  =  -\frac12 \laplace_r + \frac {(j\pm m)^2} {2r^2}  +\frac 12 r^2 - \mu\, .
 \label{Ljplus}
\end{equation}
Similarly as before the bootstrap argument gives 
$y \in C^{\infty} \cap L^2(\R^+, \CC^2; r )$. 
The associated inner product is given by 
$$
(y , z) = \int_0^{\infty} \left( 
y_+(r) \overline{z_+(r)} + y_-(r) \overline{z_-(r)} \right) r\, dr \, .
$$
On the other hand, one can argue (as in \cite{PW}) that any solution $(\la, y,j,m, \mu)$ to \eq{egnpair}
defines a solution $(\la, \Phi, m, \mu)$ to \eq{egn}  with  $\Phi \in L^2(\R^2, \CC^2)$.

Note that $L^{\pm}_j = L^{\mp}_{-j}$, a fact connected to the Hamiltonian symmetries
of $A$.  The next proposition analogous to \cite{PW} summarizes the results of this section. 

\begin{proposition}
A complex number $\la$ is an eigenvalue of $A$ 
if and only if for some integer $j$ the system of equations \eq{egnpair} 
have a nontrivial solution satisfying \eq{BCfourier}.
An eigenfunction of $A$ associated with an eigenvalue $\la$ has the form 
$$
v_{\la,j,m}(x,t) =
C e^{\la t} e^{i(j+m)\tht} y_+^{(j+m)}(r) + 
\overline{C e^{\la t} e^{i(j-m)\tht} y_-^{(j-m)}(r)}\, .
$$
If $(y_+,y_-)$ form a solution to \eq{egnpair} for some pair $(\la,j)$,
then $(y_-,y_+)$ form a solution for $(-\la,-j)$ and $(\bar y_-,\bar y_+)$
form a solution for $(\bar\la,-j)$.
\label{intopairs}
\end{proposition}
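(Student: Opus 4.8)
The plan is to establish the three assertions in turn---the spectral equivalence, the shape of the eigenfunction, and the two symmetry relations---each by unwinding the reduction carried out in this section together with the asymptotic description in Theorem~\ref{abt}.

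\emph{Equivalence.} For the forward implication I would begin with a nonzero $\Phi\in D(A)$ satisfying \eq{egn}, pass to the basis of eigenvectors of $J$ used above to obtain the coupled pair \eq{bog1}--\eq{bog2} for $\Phi_\pm\in L^2(\R^2,\CC)$, and then expand each $\Phi_\pm$ in azimuthal Fourier modes. Orthogonality of $\{e^{ik\tht}\}_{k\in\Z}$ decouples the system into the independent pairs \eq{Ff1}--\eq{Ff2}, which are precisely \eq{egnpair}. Since $\Phi\neq0$, at least one mode pair $(\yp,\ym)$ is nontrivial for some $j$, and membership $\Phi_\pm\in L^2$ combined with Theorem~\ref{abt} forces the boundary conditions \eq{BCfourier}. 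For the converse I would reverse these steps: given a nontrivial solution of \eq{egnpair}--\eq{BCfourier} for some $j$, set $\Phi_+=e^{i(j+m)\tht}\yp(r)$ and $\Phi_-=e^{i(j-m)\tht}\ym(r)$, reassemble $\Phi$ in the $J$-eigenbasis, and verify that $\Phi\in D(A)\subset L^2(\R^2,\CC^2)$ and solves \eq{egn}. This is the argument ``as in \cite{PW}'' already invoked in the text.

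\emph{Mode shape.} To read off the eigenfunction I would reconstruct the genuine real-valued solution of $\del_t\Phi=A\Phi$ generated by the complex eigenfunction. Since $A$ is a real operator, $\overline\Phi$ is an eigenfunction for $\bar\la$, so $\Phi(t)=Ce^{\la t}\Phi+\overline{Ce^{\la t}\Phi}$ is real; recalling that $\Phi=(\Re v,\Im v)^T$, equivalently $v=\Phi_1+i\Phi_2$, together with the change-of-basis relations $\Phi_+=\Phi_1+i\Phi_2$ and $\Phi_-=\Phi_1-i\Phi_2$, a short computation on the components of $\Phi(t)$ gives $v(t)=Ce^{\la t}\Phi_++\overline{Ce^{\la t}\Phi_-}$. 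Substituting the single-mode expressions for $\Phi_\pm$ yields exactly the stated form of $v_{\la,j,m}$; no further analysis is required here.

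\emph{Symmetries.} Both relations follow by direct substitution into \eq{Ff1}--\eq{Ff2}, using that $w,r,\mu,m,j$ are real and that the index enters only through $(j\pm m)^2$. Replacing $(\la,j)\mapsto(-\la,-j)$ and $(y_+,y_-)\mapsto(y_-,y_+)$ sends \eq{Ff1} to \eq{Ff2} and back, because $(-j\pm m)^2=(j\mp m)^2$ while the sign of $i\la$ reverses; this is the analytic content of the identity $L^{\pm}_j=L^{\mp}_{-j}$ already noted. Likewise, conjugating \eq{Ff1}--\eq{Ff2} and setting $(\la,j)\mapsto(\bar\la,-j)$, $(y_+,y_-)\mapsto(\bar y_-,\bar y_+)$ again interchanges the two equations. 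In each case the conditions \eq{BCfourier} are manifestly preserved under swapping and conjugation, so the transformed data is again an admissible solution.

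\emph{Main obstacle.} The only genuinely delicate step is the converse half of the equivalence: one must confirm that the reassembled single-mode $\Phi$ actually lies in the domain $D(A)=D(L_c)$ and has the required decay, rather than merely solving the reduced ODEs in a formal sense. This is precisely where the bootstrap regularity and the decay rates of Theorem~\ref{abt} enter; once the dictionary among $\Phi$, $(\Phi_+,\Phi_-)$, and the Fourier data $(y_+,y_-)$ is fixed, the mode-shape and symmetry statements are routine.
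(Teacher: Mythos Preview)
Your proposal is correct and follows essentially the same approach as the paper. The paper does not give an explicit proof here---it presents the proposition as a summary of the Fourier reduction already carried out in the section and defers to \cite{PW}---and your plan simply makes that reduction (equivalence via Fourier decomposition and Theorem~\ref{abt}, mode shape via the $J$-eigenbasis relations $\Phi_\pm=\Phi_1\pm i\Phi_2$, and symmetries via $L_j^{\pm}=L_{-j}^{\mp}$ plus conjugation) explicit, with the one genuinely nontrivial point---that a single-mode solution reassembles into an element of $D(A)$---correctly flagged.
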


\subsection{Bounds on unstable eigenvalues}
At a first glance it may seem impossible to solve infinitely many
systems of the form \eq{egnpair}.  Fortunately, similarly as in
\cite{PW} it is possible to restrict the index $j$ for which an
unstable eigenvalue may occur.  
The proof of the following 
Proposition can be found in \cite{GG} but for the sake of completeness and clarity of exposition we 
provide it here as well.
%The proof of the following 
%Proposition can be found in \cite{GG}. 

\begin{proposition} All the possible unstable eigenvalues and eigenfunctions of the operator $A$ 
must correspond to bounded solutions of \eq{BCfourier}--\eq{egnpair}  for $j$  satisfying
\begin{equation} 
j \ne 0 \quad \mbox{and} \quad 
|j| < 2m \, .  \label{jindex2} \end{equation}
\label{jtheorem} 
\end{proposition}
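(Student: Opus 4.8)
The plan is to reduce everything to the sign of a single self-adjoint quadratic form evaluated on an eigenfunction. Throughout I would work with the decoupled pair \eq{egnpair}, $L_j y = i\la R y$, recalling that $L_j$ is self-adjoint and $R=\mathrm{diag}(1,-1)$ on $L^2(\R^+,\CC^2;r)$. First I would pair this equation with $y$: since $L_j$ is self-adjoint, $(L_j y,y)$ is real, while on the other hand $(L_j y,y)=i\la(Ry,y)=i\la\,(\|y_+\|^2-\|y_-\|^2)$. As $(Ry,y)\in\R$, writing $\la=a+ib$ shows that the reality of the left-hand side forces $a\,(Ry,y)=0$. Hence for any \emph{unstable} eigenvalue (one with $a=\Re\la\neq0$) we necessarily have $\|y_+\|^2=\|y_-\|^2$, and therefore $(L_j y,y)=i\la(Ry,y)=0$. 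The whole claim is thereby reduced to showing that $(L_j y,y)$ cannot vanish on a nontrivial solution when $j=0$ or $|j|\geq2m$.

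The key is an algebraic rewriting of this form. After integrating by parts (the boundary terms vanish by \eq{BCfourier} and the decay in Theorem~\ref{abt}), I would expand $(L_j y,y)$ and regroup the cross terms produced by the coupling matrix $\left(\begin{smallmatrix}2&1\\1&2\end{smallmatrix}\right)$ into a perfect square $\int|w|^2|y_++y_-|^2\,r\,dr$. Using $L_j^\pm=L+\tfrac{j(j\pm2m)}{2r^2}$ with $L=-\tfrac12\laplace_r+\tfrac{m^2}{2r^2}+\tfrac12r^2-\mu$, and the fact that the profile equation \eq{GPradial} says precisely $L_-w=0$ for $L_-:=L+|w|^2$, I obtain
\begin{equation*}
(L_j y,y)=(L_-y_+,y_+)+(L_-y_-,y_-)+\int_0^\infty\frac{j(j+2m)}{2r^2}|y_+|^2 r\,dr+\int_0^\infty\frac{j(j-2m)}{2r^2}|y_-|^2 r\,dr+\int_0^\infty|w|^2|y_++y_-|^2 r\,dr .
\end{equation*}

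Now I would run the sign analysis. Since $w>0$ is a nodeless solution of $L_-w=0$, it is the ground state, so $L_-\geq0$ with kernel $\mathrm{span}\{w\}$; thus the first two terms and the last are $\geq0$. If $|j|\geq2m$ then both coefficients $j(j\pm2m)$ are $\geq0$ and at least one is strictly positive, so $(L_j y,y)=0$ forces the corresponding component $y_\pm\equiv0$, whereupon the perfect-square term forces the other to vanish too --- a contradiction. If $j=0$ the barrier terms disappear, and vanishing of the form forces $(L_-y_\pm,y_\pm)=0$ and $y_++y_-=0$, i.e. $y_+=cw$, $y_-=-cw$; substituting this back into \eq{egnpair} collapses it to $i\la\,cw=0$, so $\la=0$ and the eigenvalue cannot be unstable. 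This leaves exactly $0<|j|<2m$, where the argument indeed breaks down, because one of the coefficients $j(j\pm2m)$ becomes negative.

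I expect the substantive content to be the positivity $L_-\geq0$ together with the identification $\ker L_-=\mathrm{span}\{w\}$, which relies on $w$ being the positive (hence simple, lowest) eigenstate. The only genuinely delicate point is the borderline index $j=0$, where the quadratic form is merely semidefinite and one cannot conclude from it alone; there I must return to the differential system to rule out the null direction $y_\pm=\pm cw$. The sign bookkeeping of $j(j\pm2m)$ and the integration by parts are then routine.
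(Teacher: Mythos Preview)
Your proof is correct and follows essentially the same strategy as the paper's: reduce to the sign of the self-adjoint form $(L_j y,y)$ and control it via the nonnegative operator $L+|w|^2$, whose ground state is the positive profile $w$. The paper's proof differs only in the algebra used to expose this nonnegativity: it discards the cross terms via the inequality $-(y_+\bar y_-+\bar y_+ y_-)\le |y_+|^2+|y_-|^2$, obtaining a strict lower bound $(L_j y,y)>E_w(y_+)+E_w(y_-)$ for $|j|>2m$, and then treats the borderline cases $j=0$ and $|j|=2m$ by separately analyzing when that inequality becomes an equality. Your perfect-square decomposition $2|y_+|^2+2|y_-|^2+2\Re(y_+\bar y_-)=|y_+|^2+|y_-|^2+|y_++y_-|^2$ keeps the identity exact, which lets all three cases be read off from one formula and makes the equality analysis (especially at $|j|=2m$, where one barrier coefficient vanishes and the other component is killed by the $|w|^2|y_++y_-|^2$ term) more transparent. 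The substantive input---$L_-:=L+|w|^2\ge0$ with $\ker L_-=\mathrm{span}\{w\}$---is the same in both arguments.
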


\begin{proof}
First we prove that $|j| \le 2m$. The main idea of the proof of this part is the same as in
\cite{GG} but we present it here for clarity. 
Assume the contrary, i.e. $|j| > 2m$, namely, $|j -m| >m$ and $|j+m|>m$. 
The operator $L_j$ is analogous to the operator $L_c$ in Section~\ref{sec:level1-stability} 
and thus the integrals used here are well defined. 
Then for any $y \ne 0$ by integration by parts 
\begin{eqnarray}
(L_j y , y) & = & 
\int_0^\infty 
\left[  
\frac 12 \left( \frac {j+m}{r}\right)^2 |y_+|^2 + 
\frac 12 \left( \frac {j-m}{r}\right)^2 |y_-|^2 
+ \left(\frac 12r^2 + 2|w|^2 -\mu\right) \left( |y_+|^2 + |y_-|^2 \right)
\right. 
\nonumber \\
& & \left. 
+ |w|^2 \left( y_+ \overline{y}_- + \overline{y}_+ y_- \right) 
-\frac 12 \left( \laplace_r y_+  \overline{y}_+ + 
\laplace_r y_- \overline{y}_- \right)
\right]
r\, dr
\nonumber \\ 
& > & 
\int_0^\infty 
\left[
\left( \frac 12 \frac{m^2}{r^2} + \frac 12r^2 + |w|^2 - \mu\right)
\left(|y_+|^2 + |y_-|^2 \right)  
+ \frac 12 \left( |\partial_r  y_+ |^2 + |\partial_r y_-|^2 \right) \right]
r\, dr \, .
\nonumber
\end{eqnarray}
Here we also used the inequality
\begin{equation}
-(y_+ \overline{y}_- + \overline{y}_+ y_-) \le |y_+|^2 + |y_-|^2 \, .
\label{abseq2}
\end{equation}

Hence  $(L_j y , y) >  E_w (y_+) + E_w(y_-)$ where 
\begin{equation}
E_w(f) =  
\frac 12
\int_0^\infty  
\left[ |\partial_r f|^2 + \left(\frac{m^2}{r^2} + r^2 + 2|w|^2 - 2\mu \right) 
|f|^2 \right]  r\, dr \, .
\label{specialenergy}
\end{equation}
The function $w(r)$ is a non-negative minimizer of the linearized energy \eq{specialenergy}
within the family of functions $f(r)$ with $\phi(r,\tht) = e^{im\tht} f(r) \in D(L_j)$. 
Since $E_w(w) = 0$ it follows that $E_w(f) \ge 0$ for all $e^{im\tht}f \in D(L_j)$. 
Hence 
$$
(L_j y , y) > 0 \,  .
$$ 
If $(\lambda,y)$ are an eigenvalue and eigenfunction vector as in \eq{egnpair} 
then also
$$
(i \la R y, y ) = 
i \la (R y, y)  > 0 \, .
$$
Therefore $i\la$  must be real if $|j| > 2m$. 

This result can be also interpreted in terms of Krein signature (see
Section~\ref{s:KS}): the signature of all eigenvalues $\lambda$ for $|j| > 2m$
is positive. 

Next, we prove the stronger result that $|j| < 2m$ and $j \ne 0$. 
First, let us consider the case $j = 0$. In that case one obtains directly 
$$
(L_j y , y)  \ge   E_w (y_+) + E_w(y_-) \, .
$$
The equality is valid only if there is equality in \eq{abseq2}, 
i.e., if $y_+ = -y_-$. Therefore $(L_j y, y)  = 0$ only if $y_+ = -y_-$  
almost everywhere and $E_w(y_+) = 0$. Since $f(r) = w(r)$ is a non-negative solution 
of the differential equation associated with \eq{specialenergy}
$$
-f_{rr} - \frac1r f_r + \frac{m^2}{r^2} f + r^2 f  + 2|w|^2 f - 2\mu f 
= 0 \, ,
$$
the second condition holds only if $y_+ = \alpha w$ 
where $\alpha$ is a complex number, 
%$|\alpha| = 1$ 
and $w = w(r)$ is a real ground state of the energy $E_w(f)$.
Thus 
$$
y = w (\alpha, -\alpha)\, .
$$  
Then the eigenvalue problem \eq{Ff1}--\eq{Ff2} reduces to $i\la w = 0$. 
Hence $\la = 0$ and there are no unstable eigenvalues for $j = 0$. 
Similarly, one can rule out the case $j = \pm 2m$. 
\end{proof}

This result can be also interpreted in terms of Krein signature (see
Section~\ref{s:KS}): the signature of all eigenvalues $\lambda$ for $|j| > 2m$
is positive. 

One can also prove the following estimate which restricts the possible unstable
eigenvalues to lie in a vertical strip.

\begin{proposition}
The real part of every eigenvalue of the operator $A$ is bounded:
\begin{equation}
|\Re \la| < 3 \max\limits_{r>0} |w(r)|^2 < 3(\mu -m) < \infty \, .
\label{realboundth}
\end{equation}
\label{boundtheorem}
\end{proposition}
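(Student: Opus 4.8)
The plan is to work one azimuthal pair at a time. By Proposition~\ref{intopairs} every eigenvalue $\la$ of $A$ comes from a nontrivial $L^2$ solution $(y_+,y_-)$ of the scalar system \eq{Ff1}--\eq{Ff2} for some integer $j$, so it suffices to bound $\Re\la$ for such a solution. First I would pair \eq{Ff1} with $y_+$ and \eq{Ff2} with $y_-$ in the weighted inner product $(f,g)=\int_0^\infty f\,\overline g\,r\,dr$. Each bracket splits into a ``free'' radial operator $-\tfrac12\laplace_r+\tfrac{(j\pm m)^2}{2r^2}+\tfrac12 r^2-\mu$, which is symmetric on the weighted space once the boundary terms in the integration by parts are discarded, plus the $w$-dependent part $2|w|^2 y_\pm$ together with the coupling $|w|^2 y_\mp$. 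The free parts therefore contribute real quantities, so the identities read $i\la\,\|y_+\|^2=(\text{real})+b$ and $-i\la\,\|y_-\|^2=(\text{real})+\bar b$, where $b=\int_0^\infty |w|^2\,y_-\,\overline{y_+}\,r\,dr$.

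The first payoff comes from taking imaginary parts, which gives $\Re\la\,\|y_+\|^2=\Im b$ and $\Re\la\,\|y_-\|^2=\Im b$; hence, whenever $\Re\la\neq0$, the two components must have equal norm, $\|y_+\|=\|y_-\|>0$ (the case $\Re\la=0$ satisfies the asserted bound trivially). To produce the stated constant I would estimate $\Re\la\,\|y_+\|^2$ as the imaginary part of the \emph{entire} $w$-dependent term $P_+=2|w|^2y_+ + |w|^2 y_-$ paired with $y_+$, giving $|\Re\la|\,\|y_+\|^2=|\Im(P_+,y_+)|\le|(P_+,y_+)|$. Bounding crudely and applying Cauchy--Schwarz with $\|y_-\|=\|y_+\|$ yields
\[
|(P_+,y_+)|\le \max_{r>0}|w(r)|^2\bigl(2\|y_+\|^2+\|y_+\|\,\|y_-\|\bigr)=3\max_{r>0}|w(r)|^2\,\|y_+\|^2 .
\]
Dividing by $\|y_+\|^2>0$ gives $|\Re\la|\le 3\max_r|w|^2$. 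Strictness is then automatic: since $w>0$ on $(0,\infty)$ and $y_+\not\equiv0$, the real part $\Re(P_+,y_+)=2\int_0^\infty|w|^2|y_+|^2\,r\,dr$ is strictly positive, so $|\Im(P_+,y_+)|<|(P_+,y_+)|$ and hence $|\Re\la|<3\max_r|w|^2$. The remaining chain $3\max_r|w|^2<3(\mu-m)<\infty$ is immediate from the global profile bound \eq{Mest} of Proposition~\ref{profile_behavior}.

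The main obstacles are two. The genuinely analytic point is justifying that the boundary contributions at $r\to0^+$ and $r\to\infty$ vanish in the integrations by parts, so that the free radial operators really do contribute real numbers; I would close this using the boundary conditions \eq{BCfourier} together with the asymptotic description in Theorem~\ref{abt}. The conceptual crux is the norm equality $\|y_+\|=\|y_-\|$ extracted from the imaginary parts, which is what forces the $w$-dependent terms to control $\Re\la$ and yields the clean constant. (I note in passing that keeping only the genuinely nonvanishing coupling term $b$, rather than bounding all of $P_+$, would sharpen the factor $3$ to $1$; the weaker constant stated here is more than sufficient and is all that the subsequent argument requires.)
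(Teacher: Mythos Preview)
Your strategy is the same as the paper's: separate the operator into a ``free'' piece whose pairing with the eigenfunction is real and a bounded $w$-dependent piece of size at most $3\max_r|w|^2$. The paper carries this out directly on the two-dimensional equation $A\Phi=\la\Phi$ before Fourier decomposition, writing $A=A_c+A_w$, verifying $\Re\int A_c\Phi\cdot\bar\Phi=0$ by integration by parts, and bounding $\|A_w\|\le 3\max_r|w|^2$ as an operator on $L^2(\R^2,\CC^2)$. You instead work mode-by-mode on \eq{Ff1}--\eq{Ff2}; this packaging yields the extra fact $\|y_+\|=\|y_-\|$ for unstable modes and the sharper constant $1$ you note, at the cost of having to justify the boundary terms for each $y_\pm$ separately rather than once in two dimensions.

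One slip in your strictness step: you claim $\Re(P_+,y_+)=2\int_0^\infty|w|^2|y_+|^2\,r\,dr$, but $(P_+,y_+)$ also contains $b=\int_0^\infty|w|^2 y_-\,\overline{y_+}\,r\,dr$, whose real part need not vanish, so $\Re(P_+,y_+)=2\int|w|^2|y_+|^2\,r\,dr+\Re b$. This does not damage the proposition as stated, since $|\Re\la|\le 3\max_r|w|^2<3(\mu-m)$ already follows from the strict profile bound \eq{Mest}. If you want the first inequality strict as well, it is cleaner to argue from your own identity $|\Re\la|\,\|y_+\|^2=|\Im b|$: equality $|\Im b|=\max_r|w|^2\cdot\|y_+\|^2$ would force $|w(r)|^2\equiv\max_r|w|^2$ wherever $|y_+y_-|>0$, which is impossible since $w$ is smooth, nonconstant, and strictly positive on $(0,\infty)$.
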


\begin{proof}
First, recall the simple bootstrap argument justifying that 
$\Phi_{\pm}$ of \eq{bog1}--\eq{bog2} is 
$C^{\infty} (\R^2, \R^2) \cap L^2(\R^2, \R^2)$. 
Then, split the operator $A$ to a vortex-profile-dependent part 
$A_w$ and independent part $A_c$: $A = A_c + A_w$, where
\begin{eqnarray}
A_c & = & J \left[ \frac12 \laplace  - 
\left(\frac12 r^2 - \mu \right)\right] \, ,
\nonumber \\
A_w & =& - J\left[ 2|w(r)|^2 + |w(r)|^2 e^{2m\theta J}R \right]\, .
\nonumber
\end{eqnarray}
Since $J$ and $R$ are constant matrices (bounded by 1 in the matrix norm)
the estimate  \eq{Mest} implies that the norm of the 
profile dependent part $A_w$ is bounded above:
\begin{equation}
\|A_w \|_{L^2} = 
\left\| - 
J\left( 2|w|^2 + |w|^2e^{2m\theta J}R \right) \right\|_{L^2} 
\le 3 M(w)  < \infty \, ,
\label{A1est}
\end{equation}
where $\|\cdot\|_{L^2}$ denotes the operator norm in $L^2(\R^2, \CC^2)$ and 
$M(w) = \max\limits_{r \in (0,\infty)} |w(r)|^2$. 
 
Multiply \eq{egn2} by the smooth complex conjugate $\Phibar$ and integrate 
over $\R^2$  to obtain
\begin{equation}
\la \|\Phi\|^2 =\int_0^{2\pi} \int_{0}^{\infty} A_c \Phi \cdot \Phibar
\, r\, dr\, d\theta 
+\int_0^{2\pi} \int_0^{\infty} A_w \Phi \cdot \Phibar \, r\, dr\, d \theta \, .
\label{phibar}
\end{equation}
The second term on the right hand side of \eq{phibar} can be estimated using  \eq{A1est}  
\begin{equation}
\int_0^{2\pi} \int_0^{\infty} A_w\Phi \cdot \Phibar \, r\, dr\, d \theta \, 
\le 3 M(w) \| \Phi \|^2\, .
\label{A1normest}
\end{equation}
Finally, the real part of 
$\int_0^{2\pi} \int_{0}^{\infty} A_c \Phi \cdot \Phibar \, r\, dr
d\theta$ vanishes, which can be checked by a simple 
but long integration by parts (omitted here).
The statement of the proposition then immediately follows by
Proposition~\ref{profile_behavior}.  
\end{proof}

Propositions~\ref{jtheorem} and \ref{boundtheorem}  restrict the
search for unstable eigenvalues to a finite number of equations (with indices $0 < j < 2m$) 
and to a vertical strip.  
Since one can expect infinitely many stable eigenvalues in both
directions on the imaginary axis it is hopeless to prove that the
imaginary part of an eigenvalue is bounded. 
Nevertheless, the possible number of unstable eigenvalues 
is limited (see Section~\ref{s:KS}).

%%%%%%%%%%%%%%%%%%%%%%%%%%%%%%%%%%%%%%%%%%%%%%%%%%%%%%%%%%%%%%%%%%%%%%%%%%%%%%%%%%%%%%%%
\section{Evans Function}
While the finite element and the Galerkin approximation methods 
provide a fast and simple way to find eigenvalues of the problem \eq{egnpair}, 
the Evans function technique \cite{CKRTJ, Evans} method has proved to be
the most reliable and robust in certain cases. This approach will be implemented here. It
is parallel to \cite{PW}, where the reader can find many details of the procedure.
 
The main idea of this approach is to identify eigenvalues of the operator $L_j$ as zeros 
of an analytic function $E_j(\la)$. First, write the system \eq{Ff1}-\eq{Ff2} 
as a $4\times 4$ system of first order ordinary differential equations:
\begin{equation}
y' = B(r,j,\la) y
\label{ODEsystem}
\end{equation}
where
\begin{equation}
B = B_{\infty} + B_w \, ,
\quad
y = 
\left(
y_+^{(j+m)} (r), 
\del_r y_-^{(j+m)} (r), 
y_+^{(j-m)} (r), 
\del_r y_-^{(j-m)} (r)
\right)^T
\end{equation}
and
\begin{equation}
B_{\infty} =
\left(
\begin{matrix}
0 & 1 & 0 & 0 \\
k^+ & -1/r & 0 & 0 \\
0 & 0 & 0  & 1 \\
0 & 0 & k^-  &  -1/r
\end{matrix}
\right)\, ,
\qquad
B_w =  |w|^2
\left(
\begin{matrix}
0 & 0 & 0 & 0 \\
4 & 0 & 2 & 0 \\
0 & 0 & 0 & 0 \\
2 & 0 & 4 & 0 
\end{matrix}
\right) \, .
\nonumber
\end{equation}
The coefficients $k^+$ and $k^-$ are given by 
\begin{eqnarray}
k^+(r,\la) & = & \frac{(j+m)^2}{r^2} + r^2 - 2\mu - 2i\la  \, ,
\nonumber \\
k^-(r,\la) & = & \frac{(j-m)^2}{r^2} + r^2 - 2\mu + 2i\la \, .
\nonumber 
\end{eqnarray}
The asymptotic behavior of solutions to \eq{ODEsystem} is described in the next theorem. 
\begin{theorem}
For any $\la \in \CC$ and $m > 0$, $\mu$ real, $j$ integer,
there exist solutions $y_i^{(0)}(r)$ and $y_i^{(\infty)}(r)$, $i = 1, 2, 3, 4$, 
 to the system \eq{ODEsystem} with
the following asymptotic behavior,
$$
\begin{array}{rcll}
y_i^{(0)}(r) & \sim & y_{0i}(r) \qquad
& \mbox{as} \quad {r \cto 0^+}\, , 
\\
y_i^{(\infty)}(r) & \sim & y_{\infty i}(r) \qquad
& \mbox{as} \quad {r \cto \infty}\, .
\end{array}
$$
Here $y_{0i}$ and $y_{\infty i}$, $i = 1, 2, 3, 4$, 
are independent solutions of the asymptotic systems
$$
y_0 = B_0(r,j,\la) y_0\, , \qquad 
y_{\infty} = B_{\infty}(r,j,\la) y_{\infty} \, ,
$$
where
$$
B_0 
= \left( \begin{matrix}
0 & 1 & 0 & 0 \\
l^+ & - \frac 1r & 0 & 0 \\
0 & 0 & 0 & 1 \\
0 & 0 & l^- & - \frac 1r 
\end{matrix} \right)
$$
and
$$
l^+(r) = \frac{(j+m)^2}{r^2}\, , \qquad
l^-(r) = \frac{(j-m)^2}{r^2}\, .
$$
The asymptotic behavior of these solutions as $r \cto + \infty$ is given by 
\begin{equation}
\begin{array}{ll}
y_{\infty 1} \sim 
e^{r^2/2}r^{\alpha_+}
\left(1/r, 1, 0, 0\right)^T \, ,
& y_{\infty 2}  \sim  
e^{r^2/2}r^{\alpha_-}
\left(0, 0, 1/r, 1 \right)^T \, ,
\\
y_{\infty 3} \sim 
e^{-r^2/2}r^{-\alpha_+}
\left(1/r, -1, 0, 0 \right)^T \, ,
& y_{\infty 4} \sim 
e^{-r^2/2}r^{-\alpha_-}
\left(0, 0, 1/r, -1 \right)^T \, ,
\end{array}
\nonumber
\end{equation}
where 
$\alpha_+ = \mu + i\la$, $\alpha_- = \mu - i\la$, 
and as $r \cto 0^+$ by 
\begin{equation}
\begin{array}{ll}
y_{01} \sim 
r^{|j+m|}
\left(1, |j+m|/r, 0, 0 \right)^T \, ,
& y_{02}  \sim 
r^{|j-m|}
\left(0, 0, 1, |j-m|/r  \right)^T \, ,
\\
y_{03} \sim 
r^{-|j+m|}
\left(1, -|j+m|/r, 0, 0 \right)^T \, ,
& y_{04} \sim 
r^{-|j-m|}
\left(0, 0, 1,  -|j-m|/r \right)^T \, .
\end{array}
\nonumber
\end{equation}
\label{abt}
\end{theorem}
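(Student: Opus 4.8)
The plan is to analyze the two singular endpoints of \eq{ODEsystem} separately, and at each endpoint to exploit the fact that the coefficient matrix $B=B_\infty+B_w$ collapses, to leading order, onto the indicated block-diagonal model ($B_0$ near the origin, $B_\infty$ near infinity). The key structural input is that the profile block $B_w$ is governed by $|w|^2$, which by Proposition~\ref{profile_behavior} and the profile asymptotics obeys $|w|^2=O(r^{2m})$ as $r\cto0^+$ and $|w|^2=O(r^{2\mu-2}e^{-r^2})$ as $r\cto\infty$. In both limits $B_w$ is strictly subdominant, and it is also the \emph{only} coupling between the two $2\times2$ diagonal blocks; consequently the four model solutions split into a pair from each block, and the task reduces to (a) reading off the asymptotics of the decoupled scalar model $u''+\tfrac1r u'=l^\pm u$ (resp.\ $=k^\pm u$) and (b) showing that the full system \eq{ODEsystem} carries four genuine solutions tracking these.

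\textbf{The regular endpoint $r\cto0^+$.} Here the only singular coefficient is the $r^{-2}$ term in $l^\pm$; the remainder $B-B_0$, namely the bounded pieces $r^2-2\mu\mp2i\la$ of $B_\infty$ together with $B_w=O(r^{2m})$, extends analytically to $r=0$. Thus the origin is a regular singular point. Each scalar block has indicial equation $\rho^2=(j\pm m)^2$, with roots $\pm|j\pm m|$, yielding the stated leading powers $r^{\pm|j\pm m|}$. I would obtain the recessive solutions $y_{01},y_{02}$ by convergent Frobenius series and the dominant ones $y_{03},y_{04}$ by a second solution/reduction of order; the logarithmic terms that can appear when $j\pm m=0$ or when the indicial roots differ by an integer do not affect the leading asymptotics in the statement. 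This endpoint is routine.

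\textbf{The irregular endpoint $r\cto\infty$.} This is the substantive part. Because $k^\pm$ is dominated by $r^2$, a dominant-balance ansatz $u\sim e^{\pm r^2/2}r^{\beta}$ forces the exponential rates $e^{\pm r^2/2}$, and matching the next, $O(1)$, order fixes the algebraic exponents to the values $\pm\alpha_\pm$ recorded in the statement. To turn the formal solutions into genuine ones I would gauge the model by $y_\infty=T(r)z$, with $T$ the diagonal matrix carrying the four factors $e^{\pm r^2/2}r^{\pm\alpha_\pm}$ (and the $(1/r,\pm1)$ eigendirections of each block), so that $z'=(\Lambda(r)+R(r))z$ with $\Lambda$ the diagonal of leading exponents and $R$ collecting the $O(1/r)$ remainders; one then checks $R\in L^1([r_0,\infty))$ and applies a Levinson-type asymptotic-integration theorem (as in Coddington--Levinson). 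For the \emph{full} system I would not treat $B_w$ as a uniformly small perturbation---after gauging, its couplings between a recessive and a dominant mode carry a factor $e^{+r^2}$ and are not integrable---but instead build each solution by a direction-adapted Volterra integral equation: dominant solutions integrated from a fixed $r_0$, recessive solutions from $+\infty$. In each such equation the super-exponential decay $|w|^2=O(e^{-r^2})$ dominates every algebraic and exponential factor produced by the Green's kernel, so the map contracts and returns a solution with exactly the model asymptotics.

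\textbf{Main obstacle.} Essentially all the difficulty is concentrated at $r\cto\infty$, on two linked points. First, the irregular singular point requires that the formal $e^{\pm r^2/2}r^{\pm\alpha_\pm}$ expansions be \emph{realized} by actual solutions with the \emph{exact} exponents, which means verifying the dichotomy/ordering hypotheses of the asymptotic-integration theorem rather than merely exhibiting a formal series. Second, distinguishing the two solutions that share a common exponential rate (one from each block, differing only algebraically) hinges entirely on the block structure of $B_\infty$, so in the full-versus-model step one must show that the coupling $B_w$ does not mix them---precisely where the super-exponential smallness of $|w|^2$ is indispensable. Finally, all estimates must be uniform for $\la$ in compact sets so that the resulting solutions, and hence $E_j(\la)$, depend analytically on $\la$; arranging this uniformity while respecting the growth ordering is the real work, the remainder being standard singular-ODE asymptotics paralleling \cite{PW}.
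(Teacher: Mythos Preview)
Your proposal is correct and takes essentially the same approach as the paper, which simply states that the proof ``relies on the asymptotic theory of Coppel \cite{Coppel}'' and refers to \cite{thesis} for details. Your Levinson-type asymptotic integration at infinity and Frobenius analysis at the origin are precisely the content of that theory, and your observation that the $B_w$-coupling must be handled by direction-adapted Volterra equations (rather than a blanket $L^1$ perturbation bound) is the correct refinement needed to separate the two solutions sharing the same exponential rate.
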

The full proof of the theorem which relies on the asymptotic theory of Coppel \cite{Coppel} 
can be found in \cite{thesis}.

The asymptotic analysis reveals that \eq{ODEsystem} has two 
exponentially growing solutions asymptotically equivalent to
$y_1^{(0)}(r)$ and $y_2^{(0)}(r)$ for $r \ll 1$ 
and two exponentially decreasing solutions asymptotically equivalent to 
$y_3^{(\infty)}(r)$ and $y_4^{(\infty)}(r)$ for $r \gg 1$. 
Note that these particular solutions are not in any way  unique. From now on the notation 
$y_1^{(0)}$, $y_2^{(0)}$, $y_3^{(\infty)}$ and $y_4^{(\infty)}$ will always refer to solutions
with the given asymptotics. 
The two-dimensional growing and decaying subspaces non-trivially
intersect only if $\la$ is an eigenvalue.
Their intersection can be detected by vanishing of the Wronskian 
$W(r,\la) = \det (y_1^{(0)}, y_2^{(0)}, y_3^{(\infty)}, y_4^{(\infty)})$. 
By Abel's formula, this determinant satisfies a differential equation  
$W'(r) = \mbox{Tr}\, (B) \, W(r)$.
It is convenient to remove the dependence on $r$ by setting
\begin{equation}
E_j(\la) = -r^2 \det( y_1^{(0)}(r), y_2^{(0)}(r), y_3^{(\infty)}(r), y_4^{(\infty)}(r) )\, .
\label{evansf}
\end{equation}
This Evans function is then independent of $r$.

It is evident that $E_j(\la) = 0$ is a necessary and sufficient
condition for the existence of an eigenvalue.  
A different representation of the Evans function is more convenient
for analytic and numerical use, however, to avoid problems such
as maintaining linear independence of solutions for extreme values of
parameters and  at mode collisions.
It is also desirable to ensure global analyticity of the Evans function
so that the presence and location of eigenvalues may be studied by
means of contour integrals via the argument principle, 

An alternative way to construct and evaluate the Evans function
involves introducing the adjoint system 
\begin{equation}
z ' = - z B (r, j, \la)\, .
\label{adjoint}
\end{equation}
The fundamental matrices $Y(z)$ (its columns are $y_i$) and $Z(z)$
(with rows $z_i$) of systems \eq{ODEsystem} and \eq{adjoint} are
related by $ZY = I$. Therefore Theorem~\ref{abt} (by a simple direct
calculation of the inverse matrix) also guarantees existence of 
four independent solutions of \eq{adjoint} as 
$z_1^{(\infty)}$, $z_2^{(\infty)}$, $z_3^{(\infty)}$ and $z_4^{(\infty)}$ 
such that the matrices $Z^{(\infty)}$ with columns $z_i^{(\infty)}$ 
and $Y^{(\infty)}$ with columns $y_i^{(\infty)}$ satisfy
$Z^{(\infty)} Y^{(\infty)} = I$. 
One can also easily deduce the asymptotic behavior of 
$z_i^{(\infty)}$ as $r \cto \infty$. 
Furthermore, a simple calculation \cite{PW}  shows that 
\begin{equation}
E_j(\la) = \det 
\left(
\begin{matrix}
z_1^{(\infty)} \cdot y_1^{(0)} & z_1^{(\infty)} \cdot y_2^{(0)} \cr
z_2^{(\infty)} \cdot y_1^{(0)} & z_2^{(\infty)} \cdot y_2^{(0)} 
\end{matrix}
\right) \, .
\label{evansfadj}
\end{equation}

Constructing $E_j(\la)$ in this way still involves maintaining the
independence of particular solutions $y_i^{(0)}$ and $z_i^{(\infty)}$, however. 
A quite simple idea to overcome this difficulty 
has been used in a number of earlier works including \cite{PW}.
Instead of considering the system \eq{ODEsystem} 
one can construct a larger $6 \times 6$ system for 
exterior products of solutions to \eq{ODEsystem}. 
This exterior system has a unique 
solution of maximum growth rate given by 
$\hat{y}^{(0)}_{12} = y_1^{(0)} \wedge y_2^{(0)}$ 
and a unique solution of maximal decay rate given by 
$\hat{y}^{(\infty)}_{34}= y_3^{(\infty)} \wedge y_4^{(\infty)}$. 
Corresponding statements hold for the adjoint system.
Evaluation of the Evans function  is then given by the simple formula
\begin{equation}
E_j(\la) =  \hat{z}^{(\infty)}_{34} \cdot \hat{y}^{(0)}_{12}\, .
\label{evansfext}
\end{equation}
It is important to realize that the Evans function given by
\eq{evansfext} is analytic in $\CC$ and it is solution- and
spatially- independent. On the other hand, the values  are the same as
given by \eq{evansf} and \eq{evansfadj}.

These Evans functions have the following symmetries.  The proof 
is the same as in \cite{PW}.

\begin{proposition}
For all integers $j$ and complex numbers $\la \in \CC$,
\begin{itemize}
\item
$\overline{E_j(\la)} = E_j(-\bar\la)$;
\item
$E_j(\la) = E_{-j} (-\la)$.
\end{itemize}
Particularly, $E_j(\la)$ is real for $\la$ purely imaginary and 
$E_0(\la) = E_0(-\la) = \overline{E_0(\overline{\la})}$.
\label{Esymmetries}
\end{proposition}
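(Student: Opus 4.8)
The plan is to work throughout with the determinant representation \eq{evansf}, and to realize each of the two symmetries as a linear change of dependent variable that transforms the coefficient matrix $B(r,j,\la)$ of \eq{ODEsystem} into the coefficient matrix of a system with shifted parameters, while carrying the distinguished solutions $y_1^{(0)},y_2^{(0)},y_3^{(\infty)},y_4^{(\infty)}$ of Theorem~\ref{abt} onto the corresponding distinguished solutions of the shifted system. Since $E_j(\la)$ is independent of $r$, it suffices to verify the coefficient-matrix identities and to track how the leading asymptotics prescribed in Theorem~\ref{abt} transform; the determinant identity then follows by elementary multilinear algebra.

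For the first symmetry I would use complex conjugation. Direct inspection of $k^+,k^-$ gives $\overline{k^{\pm}(r,\la)}=k^{\pm}(r,-\overline{\la})$, while $B_w$ is real, so $\overline{B(r,j,\la)}=B(r,j,-\overline{\la})$; hence if $y$ solves \eq{ODEsystem} with parameter $\la$ then $\overline{y}$ solves it with parameter $-\overline{\la}$. The leading coefficient vectors of $y_{01},y_{02}$ are real and independent of $\la$, and at infinity one has $\overline{r^{-\alpha_{\pm}(\la)}}=r^{-\alpha_{\pm}(-\overline{\la})}$ with $e^{\pm r^2/2}$ and the coefficient vectors real. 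Thus conjugation maps each distinguished solution of the $\la$-system onto the distinguished solution of the $(-\overline{\la})$-system carrying the \emph{same} index. Taking the complex conjugate of \eq{evansf}, and using $\overline{\det M}=\det\overline{M}$ together with the reality of $-r^2$, yields $\overline{E_j(\la)}=E_j(-\overline{\la})$.

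For the second symmetry I would use the permutation $P$ that interchanges the coordinate pair $(1,2)$ with $(3,4)$, so that $P=P^{-1}$ and $\det P=1$. Conjugation by $P$ swaps the two $2\times2$ blocks of $B_\infty$, which is precisely the effect of the substitution $(j,\la)\mapsto(-j,-\la)$: sending $j\mapsto -j$ turns $(j+m)^2,(j-m)^2$ into $(j-m)^2,(j+m)^2$ while $\la\mapsto-\la$ flips the $2i\la$ terms, together interchanging $k^+\leftrightarrow k^-$; and $B_w$ is invariant under the same conjugation. Hence $P\,B(r,j,\la)\,P^{-1}=B(r,-j,-\la)$, so $P$ maps solutions of the $(j,\la)$-system to solutions of the $(-j,-\la)$-system, and comparing leading asymptotics shows $P$ sends the distinguished solutions with the index swaps $1\leftrightarrow2$ and $3\leftrightarrow4$. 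Substituting into \eq{evansf}, pulling out $\det P=1$, and undoing the two column transpositions (a factor $(-1)^2=1$) gives $E_{-j}(-\la)=E_j(\la)$. The displayed consequences are then immediate: for $\la$ purely imaginary $-\overline{\la}=\la$, so the first identity gives $E_j(\la)\in\R$; and setting $j=0$ in the second gives $E_0(\la)=E_0(-\la)$, which combined with the first yields $E_0(\la)=\overline{E_0(\overline{\la})}$.

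The main obstacle, and the only point needing care beyond routine computation, is to justify that these linear maps send distinguished solutions to distinguished solutions as \emph{exact} equalities rather than merely up to a scalar. This rests on the canonical determination of the growing subspace as $r\cto 0^+$ and the decaying subspace as $r\cto\infty$, and on the fact that the normalizations fixed by the explicit leading coefficients in Theorem~\ref{abt} are preserved exactly by conjugation and by $P$, both being entrywise operations on those real leading vectors. Once this correspondence is secured, the determinant bookkeeping — signs of the column swaps, $\det P=1$, and the common prefactor $-r^2$ — is elementary.
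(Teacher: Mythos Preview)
Your argument is correct and is the standard one: the paper itself does not give a proof but refers to \cite{PW}, and the approach there is exactly the one you describe --- realize each symmetry as a linear transformation (complex conjugation, respectively the block-swap $P$) that conjugates $B(r,j,\la)$ into $B(r,j,-\bar\la)$ or $B(r,-j,-\la)$, check that the normalized asymptotics from Theorem~\ref{abt} are carried to themselves (with the appropriate index swaps), and read off the determinant identity. Your handling of the only delicate point, the well-definedness under the residual non-uniqueness of the $y_i^{(0)},y_i^{(\infty)}$, is also right: the Evans function depends only on the wedges $y_1^{(0)}\wedge y_2^{(0)}$ and $y_3^{(\infty)}\wedge y_4^{(\infty)}$, and these are fixed by the leading asymptotics, so ``exact equality'' of individual solutions is not actually needed.
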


%%%%%%%%%%%%%%%%%%%%%%%%%%%%%%%%%%%%%%%%%%%%%%%%%%%%%%%%%%%%%%%%
\section{\label{s:KS}%
Krein Signature} 

In this section we establish some general basic continuation results
on Krein signature for finite systems of eigenvalues in
infinite-dimensional problems, and identify the eigenvalues of
negative signature for the linearized Gross-Pitaevskii equation \eq{GPlin}.

A typical context \cite{GSS, Sandstede} in which Krein signature arises is in 
study of linearized Hamiltonian systems 
\begin{equation}\label{linHam}
v_t = JL v
\end{equation}
on a Hilbert space $X$ with inner product $(\cdot, \cdot)$, 
where $v \in X$, 
$L\colon X \cto X$ is symmetric, $(Lu,v) = (u,Lv)$, and
$J\colon X \cto X$ is invertible and 
skew-symmetric, $(Ju,v) = -(u,Jv)$. 
We are interested in cases when $L$ is unbounded and not a positive operator 
and has a finite number of negative eigenvalues. In particular, we will apply the 
results of this section to the operators $L=L_j$, $J=\hat J = -iR$, see \eq{egnpair}.
Note that individual operators $L_j$ that we consider here do not have all the
symmetries of the Hamiltonian operator $L = L_c + L_w$ of \eq{Lcwdef}.  

To define Krein signature of a discrete eigenvalue $\lambda$ of $JL$ 
(or more generally, a finite collection of such eigenvalues), consider
the restriction of the energy form $(L\cdot, \cdot)$ to the associated
generalized eigenspace. 
If this restriction is positive or negative definite, the Krein signature of the
eigenvalue is positive or negative, respectively. In the case when the
restricted energy is indefinite, the Krein signature of the eigenvalue is
indefinite as well. Note that it is easy to see that the Krein signature of
each eigenvalue off the imaginary axis is zero, since if 
$JLu=\la u$ with $\la\ne-\bar\la$, then $(Lu,u)=0$ since
\begin{equation}
\label{e.JLz}
\la(J\inv u,u)=(Lu,u)=(u,Lu)=(u,\la J\inv u)= -\bar\la(J\inv u,u).
\end{equation}

Let us point out that in \cite{GL,YS} (also see \cite{gril,mackay}) it is
rigorously proved for finite-dimensional systems
that the only way eigenvalues of a system depending on a
parameter can leave the imaginary axis (as a quadruplet, since the symmetry
of the problem forces two complex conjugate pairs of the purely imaginary
eigenvalues to collide simultaneously) is via a collision of two purely
imaginary eigenvalues of opposite signature. 
This behavior is generic -- the
passing of two eigenvalues of opposite Krein signature on the imaginary axis
is an event of codimension one (R.~Koll{\'a}r \& P.~Miller 2010, unpublished), 
as a particular quantity
involving eigenvectors of the associated eigenspaces must vanish.

For simplicity, in what follows we will always assume $J$ is invertible.
Then $\lambda u=JLu$ is equivalent to $Lu=\lambda J\inv u$, and 
thus
\begin{equation}\label{e.LJ}
(Lu, u) = \lambda (J^{-1}u, u)\, .
\end{equation}
For this reason it is sometimes more convenient to use the real
quadratic form $(iJ^{-1}u, u)$ instead of $(Lu,u)$.

\subsection{Finite systems of eigenvalues}

In this subsection we formulate precise statements of some 
key properties of Krein signature that apply to unbounded operators 
of the type we consider.
A key fact that makes the Krein signature so useful for continuation
problems is that for finite systems of imaginary eigenvalues, 
it can never be (positive or negative)
{\em semi-definite} without being {\em definite}.
This is a consequence of the following. 

\begin{lemma}
Let $X$ be a Hilbert space, and let $L$ be a symmetric and $J$ a
skew-adjoint operator on $X$ with a bounded inverse $J^{-1}$. 
Let $\Sigma \subset i\R$ be 
a finite set of discrete purely imaginary eigenvalues of $JL$ 
and let $X_1$ be the corresponding spectral subspace
(the span of all generalized eigenvectors for eigenvalues in $\Sigma$).
Then the quadratic form $(iJ^{-1}u,u)$ is non-degenerate on $X_1$.
\label{GelfLid}
\end{lemma}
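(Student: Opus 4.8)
The plan is to exploit the natural indefinite Hermitian structure attached to $J$. First I would introduce the bounded self-adjoint operator $G = iJ^{-1}$ (self-adjoint because $J^{-1}$ is skew-adjoint, bounded and invertible by hypothesis) and the associated Hermitian form $[u,v] := (Gu,v) = (iJ^{-1}u,v)$, which is exactly the quadratic form $(iJ^{-1}u,u)$ of the statement when $u=v$. Since $G$ is invertible, $[\cdot,\cdot]$ is \emph{non-degenerate on all of $X$}: $[u,w]=0$ for every $w$ forces $Gu=0$, hence $u=0$. The central structural observation is that $T:=JL$ is skew-adjoint with respect to this form, i.e.\ $[Tu,v] = -[u,Tv]$. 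This is a one-line computation using $L$ symmetric and $J$ skew-adjoint: $[Tu,v]=(iLu,v)$, while $[u,Tv]=((JL)^{*} G u,v) = (-LJ\,iJ^{-1}u,v) = -(iLu,v)$. Equivalently, the form-adjoint satisfies $T^{\dagger}=-T$.

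Next I would use the finiteness and discreteness of $\Sigma$ to form the Riesz spectral projection $P = \frac{1}{2\pi i}\oint_{\Gamma}(zI-T)^{-1}\,dz$ onto $X_1$, where $\Gamma$ encircles $\Sigma$ and no other spectrum; then $X_1=\mathrm{ran}\,P$ is finite-dimensional, $X_2:=\ker P$ is $T$-invariant, and $X=X_1\oplus X_2$. The key claim is that $P$ is \emph{self-adjoint with respect to $[\cdot,\cdot]$}, i.e.\ $P^{\dagger}=P$. Writing $P^{\dagger}=G^{-1}P^{*}G$ and using $T^{\dagger}=-T$, a contour-conjugation argument identifies $P^{\dagger}$ as the Riesz projection of $T$ associated with the spectral set $-\overline{\Sigma}$; since $\Sigma\subset i\R$ we have $-\overline{\Sigma}=\Sigma$, so $P^{\dagger}=P$. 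Self-adjointness of $P$ gives at once $[Pu,(I-P)v]=[u,P(I-P)v]=0$, so $X_1$ and $X_2$ are $[\cdot,\cdot]$-orthogonal. Finally, if $u_0\in X_1$ is $[\cdot,\cdot]$-orthogonal to all of $X_1$, then by the orthogonality it is orthogonal to $X_2$ as well, hence to all of $X=X_1\oplus X_2$; non-degeneracy of $[\cdot,\cdot]$ on $X$ forces $u_0=0$. Thus $[\cdot,\cdot]$ is non-degenerate on $X_1$, which is the assertion.

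The main obstacle is the unboundedness of $T=JL$: the skew-adjointness $[Tu,v]=-[u,Tv]$ and the identity $T^{\dagger}=-T$ must be stated on the correct domains, and the manipulation $P^{\dagger}=G^{-1}P^{*}G$ together with the conjugation of the contour $\Gamma$ (which sends the enclosed spectrum of $T^{*}$ to $\overline{\Sigma}$ and, after conjugation by $G$, the spectrum of $-T$ to $-\overline{\Sigma}$) requires care with orientation and with the fact that $X_2$ may contain essential or continuous spectrum. All of these only use that $\Sigma$ is a finite set of isolated eigenvalues of finite multiplicity, so the resolvent is analytic on $\Gamma$ and $P$ is well-defined and finite-rank; none of it needs a spectral decomposition of the full operator. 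As a more elementary alternative that avoids the operator identity, one can instead argue directly on generalized eigenvectors: if $(T-\la)^{p} u=0$ and $v$ lies in the generalized eigenspace for $\mu$, then iterating $[u,(T+\bar\la)v]=[(\la-T)u,v]$ gives $[u,w]=0$ whenever $\mu\ne-\bar\la$, which pins the pairing of $X_1$ to the part of the spectrum in $-\overline{\Sigma}=\Sigma$; this route, however, is only fully satisfactory when $X_2$ itself decomposes into generalized eigenspaces, so I would present the projection argument as the robust one.
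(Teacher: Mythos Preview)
Your argument is correct and follows the same overall architecture as the paper's: decompose $X = X_1 \oplus X_2$ spectrally, show that $X_1$ and $X_2$ are orthogonal with respect to the form $[u,v]=(iJ^{-1}u,v)$, and conclude non-degeneracy on $X_1$ from non-degeneracy on all of $X$. The difference lies only in how the orthogonality $[X_1, X_2]=0$ is obtained. You prove that the Riesz projection $P$ is $[\cdot,\cdot]$-self-adjoint via the operator identity $T^\dagger = -T$ and the contour symmetry $-\overline{\Sigma}=\Sigma$, which is a clean structural argument but, as you note, requires care with the adjoint of the unbounded operator $T$ and with the orientation of $\Gamma$ under $z\mapsto -\bar z$. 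The paper instead takes the elementary route directly: for a generalized eigenvector $u$ with $(\la-JL)^n u=0$ and any $v\in X_2$, it writes $v=(\la-JL)^n w$ with $w=(\la-JL)^{-n}v\in X_2$ (legitimate since $\la$ lies in the resolvent set of $JL|_{X_2}$), and then iterates the identity $(J^{-1}a,(\la-JL)b) = (J^{-1}(\bar\la+JL)a,b)$, using $\bar\la = -\la$ to obtain zero.

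One correction: your closing remark that the elementary alternative ``is only fully satisfactory when $X_2$ itself decomposes into generalized eigenspaces'' undersells it. The paper's device of replacing an arbitrary $v\in X_2$ by $(\la-JL)^{-n}v$ needs nothing about the spectral structure of $X_2$ beyond $\la\notin\sigma(JL|_{X_2})$, which is automatic from the Riesz decomposition. So the direct computation is in fact fully rigorous in the general unbounded setting and sidesteps all the domain and contour subtleties you flag; it is actually the simpler of the two routes here.
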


\begin{proof}
This result is well-known in the finite-dimensional case
(see \cite[p.~180]{YS})
and the proof here is not very different, 
based on the spectral decomposition 
$$X = X_1 \oplus X_2\, . $$
of the underlying Hilbert space into 
the finite-dimensional $JL$-invariant subspace corresponding to $\Sigma$ 
and its $JL$-invariant spectral complement.
The spectrum of $JL|_{X_1}$ is $\Sigma$, 
and the spectrum of $JL|_{X_2}$ contains no point of $\Sigma$. 
(See Theorem III-6.17 of \cite{Kato}.)

We claim that $(J\inv u,v)=0$ whenever $u\in X_1$ and $v\in X_2$.
This is true because, if $u$ is a generalized eigenvector for some
$\lambda\in\Sigma$, then
$(\la-JL)^nu=0$ for some $n$, and if $v\in X_2$, then $w=(\la-JL)^{-n}
v\in X_2$, and one checks that 
\[
(J\inv u,v)=(J\inv u,(\lambda-JL)^nw)=
(J\inv(\bar\la+JL)^nu,w) = 0.
\]
Now, if the form $(iJ\inv u,u)$ is degenerate on $X_1$, 
it means there exists a non-trivial $u \in X_1$
such that $(J^{-1}u,v)=0$ for all $v \in X_1$, hence for all $v\in X$. 
Hence $J\inv u=0$, so $u = 0$, a contradiction.
\end{proof}

A simple corollary for simple eigenvalues immediately follows.
\begin{corollary}
\label{Simple}
Let $X$, $L$, $J$ be as above. 
Let $\la \in i\R$
be a simple isolated eigenvalue of $JL$ with corresponding
eigenvector $u$.
Then $(iJ^{-1}u, u)$ is non-zero, and if $\la \ne 0$, 
so is the Krein signature $\mysgn(Lu,u)$.
\end{corollary}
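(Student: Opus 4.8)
The plan is to deduce both assertions directly from Lemma~\ref{GelfLid} together with the identity \eq{e.LJ}. Since $\la$ is simple and isolated, I would apply the lemma with $\Sigma = \{\la\}$; the corresponding spectral subspace is then exactly the one-dimensional span of $u$, because algebraic simplicity rules out any nilpotent part and hence any generalized eigenvector beyond $u$ itself. This reduces everything to understanding a single Hermitian form on a line.

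For the first claim, I would first record that the sesquilinear form $B(v,w) = (iJ\inv v, w)$ is Hermitian, hence real on the diagonal: using $(J\inv)^* = -J\inv$ (skew-adjointness of $J$ passes to its bounded inverse) one checks that $\overline{(iJ\inv u, u)} = (iJ\inv u, u)$. Lemma~\ref{GelfLid} asserts that $B$ is non-degenerate on $X_1 = \spanv\{u\}$. On a one-dimensional space, non-degeneracy of a Hermitian form is equivalent to its single diagonal value being nonzero: if $(iJ\inv u, u)$ vanished, then $B(u, \alpha u) = \bar\alpha\,(iJ\inv u, u) = 0$ for every scalar $\alpha$, so $u$ would lie in the radical of $B|_{X_1}$, contradicting non-degeneracy. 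Hence $(iJ\inv u, u) \ne 0$.

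For the second claim, I would invoke \eq{e.LJ}, which gives $(Lu, u) = \la\,(J\inv u, u)$. Writing $\la = i\om$ with $\om \in \R$ (nonzero exactly when $\la \ne 0$) and using $(iJ\inv u, u) = i(J\inv u, u)$, a one-line manipulation yields $(Lu, u) = \om\,(iJ\inv u, u)$, a real number that is nonzero precisely because $\om \ne 0$ and the first claim guarantees $(iJ\inv u, u) \ne 0$. Since the eigenvalue is simple, the restriction of the energy form $(L\cdot,\cdot)$ to the generalized eigenspace $\spanv\{u\}$ reduces to this single value, so the Krein signature is $\mysgn(Lu,u) = \mysgn(\om)\,\mysgn(iJ\inv u, u) \ne 0$.

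There is no genuine obstacle here: the content is carried entirely by Lemma~\ref{GelfLid}, and the only points requiring care are bookkeeping ones --- confirming that simplicity forces $X_1$ to be one-dimensional with no Jordan block, and verifying that $(iJ\inv \cdot, \cdot)$ is a real (Hermitian) form, so that ``non-degenerate on a line'' coincides with ``nonzero diagonal value.'' Both are routine given the standing hypotheses that $J$ is skew-adjoint with a bounded inverse.
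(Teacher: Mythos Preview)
Your proposal is correct and matches the paper's intent exactly: the paper states only that the result ``immediately follows'' from Lemma~\ref{GelfLid}, and you have supplied precisely those details---applying the lemma with $\Sigma=\{\la\}$ so that $X_1=\spanv\{u\}$, noting that non-degeneracy on a line means the diagonal value $(iJ\inv u,u)$ is nonzero, and then invoking \eq{e.LJ} to get $(Lu,u)=\om\,(iJ\inv u,u)\ne0$ when $\la=i\om\ne0$.
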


In the application we will consider, the operator $L$ depends continuously 
(in a sense we will make precise) on a real parameter $\mu$.  
By the corollary above, the only possibility for
a continuously varying eigenvalue $\la=\la(\mu)$ of $JL(\mu)$ to change its
Krein signature is for it to collide with another eigenvalue, or to cross 
zero.  As a simple consequence of \eq{e.LJ}, 
a simple eigenvalue crossing zero will flip its Krein signature to the opposite.
(If the change is from negative to positive this may correspond to
symmetry-breaking instability \cite{mackay}.)

\begin{corollary}
\label{crossing}
Let $X$ be a Hilbert space, and let
$J$ be a skew-symmetric operator on 
$X$ with bounded inverse. Suppose $s\mapsto L(s)$
is a family of symmetric operators on $X$, 
for $s$ in an interval $\calI\subset\R$. 
Assume that for $s\in\calI$, $(\la(s), u(s))$  
is a continuous family of isolated simple purely
imaginary eigenvalues and corresponding eigenvectors 
for the problem
$$
\la(s) u(s) = JL(s) u(s) \, .
$$
Then $i\lambda(s) (L(s)u(s),u(s))$
has the same sign for all $s\in \calI$ with $\la(s)\ne0$.
\end{corollary}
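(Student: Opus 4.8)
The plan is to reduce the whole statement to tracking the single real-valued quadratic form $\phi(s) = (iJ\inv u(s), u(s))$ and then invoke Corollary~\ref{Simple}. The key device is the identity \eq{e.LJ}: since $\la(s)u(s)=J L(s)u(s)$ gives $L(s)u(s)=\la(s)J\inv u(s)$, we have $(L(s)u(s),u(s)) = \la(s)(J\inv u(s), u(s))$. This is exactly what lets us avoid the unbounded operator $L(s)$, whose dependence on $s$ is \emph{not} assumed continuous in any operator-norm sense; everything is rewritten in terms of the continuous data $\la(s)$, $u(s)$ and the bounded $J\inv$. Writing $\la(s)=i\beta(s)$ with $\beta(s)$ real and using $\la(s)^2=-\beta(s)^2$ together with $i(J\inv u(s),u(s)) = (iJ\inv u(s),u(s)) = \phi(s)$, a one-line computation yields
$$
i\la(s)\,(L(s)u(s), u(s)) = i\la(s)^2\,(J\inv u(s), u(s)) = -\beta(s)^2\,\phi(s).
$$

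Next I would record that $\phi$ is real and continuous. Because $J$ is skew-adjoint, so is $J\inv$, hence $iJ\inv$ is self-adjoint and $\phi(s)=(iJ\inv u(s),u(s))$ is real. For continuity, $J\inv$ is bounded and $s\mapsto u(s)$ is continuous in $X$ by hypothesis, so $s\mapsto iJ\inv u(s)$ is continuous and the inner product is jointly continuous; thus $s\mapsto\phi(s)$ is continuous on $\calI$.

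The heart of the argument is then Corollary~\ref{Simple}: for each $s\in\calI$ the eigenvalue $\la(s)$ is simple, isolated and purely imaginary, so $\phi(s)=(iJ\inv u(s),u(s))\neq0$, and this holds for \emph{all} $s$, including any $s$ with $\la(s)=0$. A continuous, nowhere-vanishing real function on the connected interval $\calI$ has constant sign, so $\phi$ keeps a fixed sign throughout $\calI$. Since $\beta(s)^2>0$ precisely when $\la(s)\neq0$, the displayed identity shows $i\la(s)(L(s)u(s),u(s))=-\beta(s)^2\phi(s)$ has the single sign $-\mysgn\phi$ at every $s$ with $\la(s)\neq0$, which is the assertion.

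I expect the only real subtlety to be the role of the auxiliary form $\phi$, and it is worth flagging in the write-up. The target quantity $i\la(Lu,u)$ vanishes at every zero of $\la$, so one cannot argue its sign-constancy directly across such a point; routing through $\phi(s)=(iJ\inv u,u)$ works precisely because Corollary~\ref{Simple} guarantees $\phi\neq0$ even at $\la=0$, so $\phi$ bridges the zero crossings while the factor $-\beta(s)^2$ affects only magnitude. A secondary point is that no normalization of the eigenvectors is needed: replacing $u(s)$ by $c(s)u(s)$ multiplies both $\phi$ and $i\la(Lu,u)$ by $|c(s)|^2>0$, leaving their signs unchanged, so the continuous family of eigenvectors may be taken as given.
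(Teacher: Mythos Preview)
Your proof is correct and follows essentially the same approach as the paper: both rewrite $i\la(s)(L(s)u(s),u(s))=\la(s)^2(iJ\inv u(s),u(s))$, invoke Corollary~\ref{Simple} to see that the continuous real quantity $(iJ\inv u(s),u(s))$ never vanishes, and conclude from $\la(s)^2\le0$. Your write-up is more detailed (explicitly checking reality and continuity, and noting the normalization invariance), but the underlying argument is identical.
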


\begin{proof}
The continuous quantity 
$(iJ\inv u(s), u(s))$ is real and does not vanish for any $s\in\calI$ 
(by Corollary \ref{Simple}).  Then
\[
i\lambda(s)(L(s)u(s), u(s))  
=\la(s)^2 (iJ\inv u(s), u(s)) 
\quad \mbox{for all $s\in\calI$.}
\]
The result follows since $\lambda(s)^2\le0$.
\end{proof}

The previous two results guarantee that the Krein signature of a purely
imaginary eigenvalue does not change unless the eigenvalue crosses the origin
or collides with other eigenvalues. Also, Theorem~\ref{essential} implies
that the operators $JL=iRL_j$ we will consider have only discrete
eigenvalues of finite multiplicity, so only finitely many eigenvalues
may collide at one point, for any value of the parameter $\mu$. 

Such colliding eigenvalues will form a finite system of eigenvalues in
the sense of Kato (see section IV-3.5 of \cite{Kato}), with a
corresponding spectral projection $X\mapsto X_1(\mu)$ that varies
continuously with $\mu$.  If no eigenvalue of negative
or indefinite signature is present in this family before collision,
then the signature is positive definite and must remain so at
collision and after.  Then all eigenvalues must remain on the
imaginary axis after collision --- no pair of eigenvalues can
bifurcate off the axis, because the signature of such a pair is
indefinite.

This is well known for finite-dimensional systems \cite{mackay}.  To
justify these statements about continuation of Krein signature for unbounded
operators, we should first define an appropriate notion of continuity.
The following definition is essentially related to the concept of
convergence of closed operators in the generalized sense of
\cite{Kato}, see  Theorem 2.25, Section IV-2.6.

\begin{definition}
Let $s\mapsto T(s)$
be a family of closed operators in $X$,
defined for $s$ in an interval $\calI\subset\R$. 
We say the family $T(s)$ is {\em resolvent-continuous} on $\calI$ 
if for each $s_0\in\calI$, there exists $\xi\in\CC$ such that
the resolvent map $s\mapsto (\ksi-T(s))\inv$
is defined and norm-continuous for $s$ in a neighborhood of $s_0$. 
\end{definition}

For such a resolvent-continuous family $T(s)$, the resolvent map is
actually (locally) continuous in $s$ for each point $\ksi$ of the
resolvent set of $T(s)$. 

Suppose now that $\Sigma_0$
is a finite set of discrete eigenvalues of $T(s_0)$. This set may be
continued continuously to comprise a finite system of
eigenvalues $\Sigma(s)$ defined on a maximal interval of existence
$\calI'\subset\calI$ in a standard way: Let $\Gamma$ be
any smooth contour (a collection of small circles, say) 
that contains $\Sigma_0$ inside, and no other point of the spectrum of 
$T(s_0)$. The spectral projection
\[
P(s)= \frac{1}{2\pi i}\int_\Gamma (\ksi-T(s))\inv\,d\ksi
\]
is well-defined and continuous in a neighborhood of $s_0$, 
its range $X_1(s)$ is $T(s)$-invariant and has constant dimension, and 
the eigenvalues of the finite-dimensional map $T(s)|_{X_1(s)}$ comprise 
a finite system of eigenvalues that vary continuously with $s$.
The projection $P(s)$ is independent of the choice of $\Gamma$, and
consequently it can be defined continuously in this way for all $s$ in
some {\em maximal} interval $\calI'\subset \calI$. Evidently the
maximal interval must be open.
(At an endpoint of $\calI'$, in general one may have collisions with
other eigenvalues or continuous spectrum, or other pathologies.)

\begin{theorem}
Let $X$ be a Hilbert space and let $J$ be a skew-symmetric operator on 
$X$ with bounded inverse. 
Suppose $s\mapsto L(s)$ is a family of symmetric operators for $s$ in
an interval $\calI\subset\R$ such that the family $JL(s)$ 
is resolvent-continuous.
Suppose $\Sigma(s)$ is a finite system of discrete eigenvalues of
$JL(s)$ as described above, with associated generalized eigenspace
$X_1(s)$, defined for $s$ in some maximal interval $\calI'$.

Then, if the quadratic 
form $(iJ^{-1}u,u)$ is (positive or negative) definite on $X_1(s_0)$
for some $s_0\in\calI'$, it has the same definiteness on $X_1(s)$ for
all $s\in\calI'$.  By consequence, all eigenvalues in $\Sigma(s)$ are 
purely imaginary for all $s\in\calI'$.
\label{rescontthm}
\end{theorem}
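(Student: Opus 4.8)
The plan is to run an open--closed (connectedness) argument on the interval $\calI'$, bootstrapping between two complementary facts: that \emph{definiteness} of the form forces the spectrum onto the imaginary axis, and that \emph{imaginary} spectrum forces the form to be \emph{non-degenerate} via Lemma~\ref{GelfLid}. Throughout, note that $Q(u):=(iJ\inv u,u)$ is a \emph{fixed} real quadratic form, with a bounded Hermitian sesquilinear form attached to it (since $J\inv$ is bounded and $J$ skew-adjoint); only the finite-dimensional subspace $X_1(s)$ moves, and it does so continuously because resolvent-continuity makes the spectral projection $P(s)$ norm-continuous (this is exactly the Kato machinery invoked in the setup, which also yields that the finite system of eigenvalues of $JL(s)|_{X_1(s)}$ varies continuously with $s$).

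The first thing I would record is the link between the two facts. If some $\la\in\Sigma(s)$ had $\Re\la\ne0$, pick an eigenvector $u\ne0$ with $JL(s)u=\la u$; then $\la\ne0$ and $\la\ne-\bar\la$, so by \eq{e.LJ} and \eq{e.JLz} we get $(J\inv u,u)=0$, i.e.\ $Q(u)=0$ with $u\ne0$. A definite form has no nonzero isotropic vectors, so definiteness of $Q$ on $X_1(s)$ already forces $\Sigma(s)\subset i\R$. With this in hand I would fix $s_0\in\calI'$ where $Q$ is, say, positive definite on $X_1(s_0)$ (the negative case is identical) and set
$$
S=\{\,s\in\calI' : Q(u)>0 \text{ for all nonzero } u\in X_1(s)\,\}.
$$
Clearly $s_0\in S$, so $S\ne\emptyset$. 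For openness: near any $s_1\in S$, transporting a basis of $X_1(s_1)$ by $P(s)$ gives a continuous frame of $X_1(s)$, so the representing Hermitian matrix $M(s)$ varies continuously; its eigenvalues are all positive at $s_1$, hence positive for nearby $s$, so $S$ is open.

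The crux is closedness. Suppose $s_n\to s_*\in\calI'$ with $s_n\in S$. For $u\in X_1(s_*)$ the vectors $u_n:=P(s_n)u\in X_1(s_n)$ satisfy $u_n\to u$, and since $Q$ is a fixed bounded form, $Q(u)=\lim Q(u_n)\ge0$; thus $Q$ is positive \emph{semi}-definite on $X_1(s_*)$. To upgrade this to definiteness I would argue that $\Sigma(s_*)\subset i\R$: each $s_n\in S$ has imaginary spectrum by the observation above, the eigenvalues of $JL(s)|_{X_1(s)}$ depend continuously on $s$, and $i\R$ is closed, so the limiting eigenvalues at $s_*$ remain imaginary. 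Lemma~\ref{GelfLid} then shows $Q$ is non-degenerate on $X_1(s_*)$, and a positive-semi-definite, non-degenerate Hermitian form is positive definite; hence $s_*\in S$ and $S$ is closed. Since $\calI'$ is an interval, hence connected, $S=\calI'$, so $Q$ is definite of the given sign throughout, and by the first observation $\Sigma(s)\subset i\R$ for every $s\in\calI'$.

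The main obstacle is precisely this closedness step — ruling out that the definite form degenerates in the limit. The resolution is the two-step bootstrap just described: definiteness along the approximating sequence yields imaginary spectrum there, closedness of $i\R$ propagates this to the limit, Lemma~\ref{GelfLid} then restores non-degeneracy, and non-degeneracy converts the semi-definite limit back to a definite form. The only genuinely analytic input beyond finite-dimensional linear algebra is the continuity of $P(s)$ and of the finite-dimensional eigenvalues, which resolvent-continuity supplies; everything else is the interplay encoded in \eq{e.LJ}, \eq{e.JLz}, and Lemma~\ref{GelfLid}.
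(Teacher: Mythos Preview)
Your proposal is correct and follows essentially the same approach as the paper. The paper packages the argument via the continuous scalar function $\rho(s)=\inf\{(iJ\inv u,u):u\in X_1(s),\ \|u\|=1\}$ and a first-failure argument (if $\rho(s_1)=0$ at a boundary of the maximal interval where $\rho>0$, continuity of eigenvalues gives $\Sigma(s_1)\subset i\R$, and Lemma~\ref{GelfLid} forces $\rho(s_1)>0$, contradiction), which is exactly your open--closed bootstrap rephrased; the same three ingredients --- definiteness forces imaginary spectrum via \eq{e.JLz}, resolvent-continuity gives continuity of $P(s)$ and of the finite eigenvalue system, and Lemma~\ref{GelfLid} upgrades semi-definite to definite at the limit --- drive both versions.
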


\begin{proof} 
The proof is again a rather straightforward extension of 
well-known results from finite dimensions \cite{GL,YS}.
Whenever the form $(iJ\inv u,u)$ is definite on $X_1(s)$ we must
have $\Sigma(s)\subset i\R$ as follows from \eq{e.JLz}.
Considering the positive definite case, let $\rho(s)$ be
the infimum of $(iJ\inv u,u)$ over the unit sphere in $X_1(s)$.
Then $\rho(s_0)>0$ and $\rho$ is continuous on $\calI'$.
If definiteness fails at some point of $\calI'$, then
$\rho(s)>0$ on some maximal strict subinterval of $\calI'$ and
$\rho(s_1)=0$ at an endpoint $s_1\in\calI'$. 
But then $\Sigma(s_1)\subset i\R$ by continuity of the eigenvalues
and so $\rho(s_1)>0$ by Lemma~\ref{GelfLid}, a contradiction.
\end{proof}

Note that in our application the resolvent-continuity condition on
the family of operators is satisfied, as the domain $Y\subset X$ of the 
operators $L(\mu)$ is fixed, and the operators $JL(\mu)$ vary continuously in
$L(Y,X)$.

%%%%%%%%%%%%%%%%%%%%%%%%%%

\subsection{Negative-signature eigenvalues for the GP equation\label{sigK}}
Here we identify all imaginary eigenvalues with negative Krein signature
for $\mu = m+1$, when the linearized problem reduces
to the case of \eq{GPlin}, the Gross-Pitaevskii equation linearized about a trivial
solution. The linearized system \eq{egnpair} for given $j$ and $m$ then reduces to 
$$
\left( H^{j+m} - \mu I \right)  u = i\la u \, ,
\qquad
\left( H^{j-m} - \mu I \right)  v = -i\la v \, ,
$$
where 
$$
H^k = -\frac 12\laplace_r + \frac{k^2}{2r^2} + \frac 12 r^2\, ,
$$
see \eq{egnpair}-\eq{Ljplus}. According to Proposition~\ref{jtheorem} we may assume $j, m > 0$.
Since, by the discussion following \eq{Wrons}, the eigenvalues of $H^{k}$ are
exactly $|k|+1 + 2n$ for $n = 0,1, 2, \dots$, with eigenfunctions $w_n^{(k)}$,
the system eigenvectors $(u,v)^T=(w_n^{(j+m)}, 0)^T$ and $(0, w_n^{(j-m)})^T$ 
correspond to eigenvalues
\begin{equation}
j+m+1 + 2n = m+1 + i\la \, ,
\qquad
|j-m|+1 + 2n = m+1 - i\la \, .
\label{linegv}
\end{equation}
The eigenvalues of the linearized Gross-Pitaevskii equation for $\mu = m+1$ then
come in two families for $j>0$:
\begin{equation}
\la = -i(j + 2n) \, , \quad
\la =
\begin{cases} i(-j + 2n) & \mbox{($0 < j < m$)}\, , \\
 i(-2m + j+2n) & \mbox{($m \le j$)}\, ,
\end{cases}
\end{equation}
for $n=0,1,2,\ldots$.
It is easy to determine the Krein signature of these eigenvalues since 
in the different cases we find
\begin{eqnarray*}
(L(u,v)^{T}, (u,v)^T) & = &((H^{j+m} - \mu I) u , u) = i\la (u,u)  = (j+2n) (u,u)\, , \\
(L(u,v)^{T}, (u,v)^T ) & = & ((H^{j-m} - \mu I) v , v) = -i\la (v,v) = (-j + 2n)(v,v)\, , \\
(L(u,v)^{T}, (u,v)^T ) & = & ((H^{j-m} - \mu I) v , v) = -i\la (v,v) = (-2m + j + 2n)(v,v) \, ,
\end{eqnarray*}
respectively.
Note that according to Proposition~\ref{jtheorem} we can restrict our search for 
possible unstable eigenvalues to the modes with $0 < j < 2m$. 
In particular, this means that when $\mu=m+1$,
the only eigenvalues with negative Krein signature are
\begin{equation*}
\mbox{
\begin{tabular}{llll}
{for $m=1$}: & $\la = -i$  ($j=1$),  &&\\
{for $m=2$}: 
& $\la = -i$  ($j=1$),  
\quad $-2i$  ($j=2$),  
\quad $-i$  ($j=3$).  
\end{tabular}
}
\end{equation*}

%%%%%%%%%%%%%%%%%%%%%%%%%%%%%%%%%%%%%%%%%%%%%%%%%%%%%%%%%%%%%%%%%%%%%%%%%%%%%%%%%%%%%%%%%%%%%%%%%%%%%%
\section{Numerical Methods and Results}
%
%In this section we first describe the way we evaluate the Evans
%function and determine the presence and location of eigenvalues.
%We then describe how the results on Krein signature from
%Section~\ref{s:KS} are used to justify the finding of all unstable
%eigenvalues and interpret eigenvalue collisions, and we suggest an
%improved numerical method to detect unstable eigenvalues by tracking
%eigenvalues of negative Krein signature. 
%In the further subsections, numerical results are discussed
%separately for singly- and multi-quantized vortices since the
%stability diagrams (diagrams of stable and unstable eigenvalues)
%reveal different patterns.  

In this section we first describe in detail the way we evaluate the Evans function and determine the presence and location
of eigenvalues. We then show how the results on Krein signature from Section~\ref{s:KS} are used to justify the finding
of all unstable eigenvalues and interpret eigenvalue collisions. Based on our analytical and numerical results we also 
suggest an improved numerical method to detect unstable eigenvalues by tracking eigenvalues of negative Krein signature. 
Despite we have not used the method in the present work we believe that such an algorithm offers a significant
reduction of computational cost in a wide variety of numerical studies of stability problems that use the Evans function. 
In the further subsections, numerical results are discussed separately for singly- and multi-quantized vortices since the stability diagrams
(diagrams of stable and unstable eigenvalues) reveal different patterns.

\subsection{\label{sec:level2-evans}%
Evans function evaluation}
To attain high precision and stability for all computations, exterior
products were used throughout for numerical evaluation of the Evans function. 
Note that exterior products may be avoided, however, if one uses the
algorithm introduced in \cite{SandComm,zumbrun}.

As easily seen from the asymptotic descriptions in Theorem~\ref{abt},
the behavior of solutions of the system \eq{ODEsystem} is
significantly different for $r \ll 1$ and $r \gg 1$.  
Consequently it is useful to rescale the solution during the integration
process over the interval $(0,\infty)$. 
(The implementation approximates this interval with $[\eps, R]$, where
$\eps = 10^{-7}$ and $R = R(\mu)$ is set in such a way that the vortex
solution is negligible at $r=R$. This $R(\mu)$ is chosen as an
increasing function with $R(0) = 5$, $R(35) = 25$.)  
The aim is to rescale in such a way that the matrix $B(r,\la, j)$
(and the solution) remains appropriately bounded throughout the integration. The
details of the rescaling used here can be found in \cite{thesis}. 

\begin{figure}[t]
\centering
\includegraphics[scale=0.45]{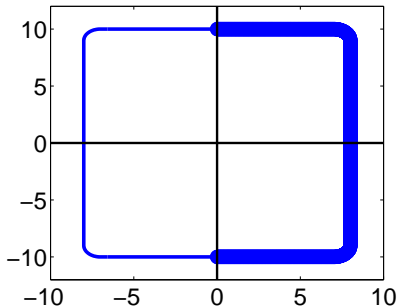}
\caption[The integration contour]{\label{fig:region} 
Contour $\Gamma$ used for counting of eigenvalues. 
Due to the symmetry of the Evans function, 
the computation was restricted to the thick contour in the right-half plane.}
\end{figure}

The presence of unstable eigenvalues is detected by contour
integration using the argument principle, similarly as in \cite{PW},
for $j$'s restricted by Proposition~\ref{jtheorem}.
The algorithm adaptively calculates the argument of the Evans function
$E_j(\la)$ along an
approximately rectangular contour $\Gamma$  which encloses a
bounded region in $\R^2$. The region is pictured on
Fig.~\ref{fig:region}. 
Note that Proposition~\ref{Esymmetries} allows us
to confine the integration to the right half plane (the total
argument change is twice as large) and also reduces the set of
$j$'s for which the calculation is necessary to positive values.
Moreover, Proposition~\ref{boundtheorem} restricts the location of
unstable eigenvalues to a vertical strip. Naturally, it is not
possible to perform numerical calculations without imposing also some
vertical bound for the region enclosed by the contour.  The vertical
bound in the implementation is chosen is such a way that the behavior
of the stable eigenvalues becomes predictable.  We justified that all
the unstable eigenvalues are included {\it a posteriori} by
examining the Krein signature of eigenvalues.

Stable eigenvalues on the imaginary axis are, thanks to the symmetry
of the Evans function, zeros of the real-valued function $E(\la)$.
Hence one can plot that real function and determine the location  and
multiplicity of its zeros within a finite interval. In the actual
implementation this is done automatically --- one first interpolates
the real function by a cubic spline and then uses the Newton method
for locating zeros. In a small neighborhood of a possible double zero,
a very fine mesh was used to resolve any ambiguity. 

The total number of eigenvalues of $E_j(\la)$ enclosed in the region is
then determined by the difference $n_{su} = n_{s+u} -n_{s}$ between
the total number of eigenvalues inside~$\Gamma$, 
$$
n_{s+u} = \frac 1{2\pi i}\oint_{\Gamma} \frac{E_j'(\la)}{E_j(\la)} d \la = 
 \frac 1{2\pi i} \oint_{\Gamma} \mbox{d arg}\, (E_j(\la))  
$$
and the number of (stable) eigenvalues $n_{s}$ on the
imaginary axis, including their algebraic multiplicity.
If $n_{su}$ is not equal to zero it must by
Proposition~\ref{Esymmetries} be an even positive integer and corresponds to
twice the number of pairs of stable and unstable eigenvalues ($\la,
\overline{\la}$) enclosed within $\Gamma$.

The precise location of unstable eigenvalues can be theoretically
determined by the generalized argument principle.
Higher moments 
\begin{equation}
s_k = \sum \la_i^k = 
\frac{1}{2\pi i}
\oint_{\Gamma} \la^k \frac{E_j'(\la)}{E_j(\la)} d \la
\label{firstmoment}
\end{equation}
give the sum of $k$-th powers of positions of all eigenvalues enclosed by
$\Gamma$. 
Given the approximate location of eigenvalues on imaginary axis and number 
of eigenvalues off the axis, the problem 
reduces to solving a set of nonlinear equations. This is particularly efficient 
in the case  $n_{su} = 2$, where only $s_1$ is necessary.  
Unfortunately, even in this case, the numerical error involved can be significant, with a
major contribution coming from a finite difference approximation of $E_j'(\la)$.
Therefore the obtained values are considered only approximate. 
The calculated location is further used to construct a smaller 
contour $\Gamma_s$ which lies solely in the right-half plane and encloses only
a single eigenvalue. The presence of a zero of $E_j(\la)$ inside a smaller
contour is again justified by the argument principle. Its location is
then determined by the generalized argument principle. 
This process can be repeated a few times until a desired precision is attained.
In the implementation the threshold for a precision was set up to be
$10^{-3}$. 

Note that this method of locating eigenvalues does not allow one to
calculate the eigenfunction directly; for that another method must
be used. 

\subsection{Numerical uses of Krein signature} 
Here we show how the Krein
signature  serves to
provide evidence that no unstable eigenvalues are missed in the
numerical computations.
Moreover we suggest a very efficient numerical algorithm based on the
theory in Section~6.  Finally, we discuss an approach that does not
require homotopy and avoids path-following completely. 

In our numerical approach we consider the linearized eigenvalue
problem for a large range of the parameter $\mu$, 
continuously connecting the linearized eigenvalue problem about
a vortex solution to a linearized eigenvalue problem about a trivial
solution. (See \cite{EDCB} for a similar approach).  
The latter problem has only purely imaginary eigenvalues, and their Krein
signatures were determined in Section~\ref{sigK}.
We plot the location of all imaginary eigenvalues in a bounded
interval on the imaginary axis which contains the continuation of all
negative-signature eigenvalues for the whole range of $\mu$ considered
($\mu \in (m+1, 35)$). By calculating the Evans function on the
contour $\Gamma$ and on an appropriate portion of the imaginary axis,
we indirectly track signature changes using the theory of Section~6,
and can thus infer restrictions on possible departures of eigenvalues
from the imaginary axis.  
This tracking process provides strong evidence that there
are no unstable eigenvalues other than the ones we find.

For a significant reduction in computational effort, 
we propose a new numerical method that avoids the computation of
contour integrals for Evans functions.
To identify all unstable eigenvalues, 
it is only necessary to perform the following calculations:
\begin{itemize}
\item
evaluate the Evans function on the imaginary axis close 
to zero to detect any eigenvalues crossing zero;
\item
follow the eigenvalue branches starting at negative Krein signature 
eigenvalues at the reference value of the continuation parameter
(in our case start at  $\mu = m+1$  and follow a total of $2m -1$
different branches),
to detect any collision with positive Krein signature eigenvalues;
\item
follow any branches of eigenvalues that bifurcate into the complex plane.
\end{itemize}
We emphasize that the theory presented in Section~6 justifies that no
unstable eigenvalues can be left out. 
No contour integration is necessary at all since one can just use 
a path-following algorithm (a boundary-value solver). Moreover, 
global analyticity of the Evans function is not needed, since only
zeroes of the real Evans function on the imaginary axis need to be
found initially, and one may use a continuation algorithm \cite{SandComm, SSch}
to track them.

A negative feature of the homotopy technique is that it has
a large overhead if one is interested in only a small set of values of
the parameter.  Therefore we examine ways of avoiding this overhead by
directly evaluating the Krein signature of a given eigenvalue on the
imaginary axis.  This is not feasible with the approach of evaluating
Evans functions using exterior products, since the eigenfunction is
unavailable and it must be calculated separately by a different method.
On the other hand, the restriction of the Evans function 
to the imaginary axis is a real function.
This means that only a continuous representation of the Evans
function is required and problems of rapid growth and numerical dependence are
much less severe. By consequence, it is
not necessary to use exterior products to evaluate the Evans function,
and it can be evaluated either (i) by calculating a
relevant Wronskian of solutions to 
the linear eigenvalue equations  
which makes it easy to compute the eigenfunction,
or (ii) one can use a simple continuation
algorithm proposed by Sandstede \cite{SandComm}. 
By either of these methods, it is possible to evaluate the Krein
signature directly, which then allows one to determine the number of
unstable eigenvalues.

Then for an eigenvalue problem of the form $JL u = \la u$  the following
method may be used.  First, determine (somehow, by using oscillation
theory, for example) the number $n_{total}$ of negative
eigenvalues of $L$. If the number is zero, there are no unstable eigenvalues of
$JL$. If $n_{total} > 0$,  the number gives the total number $n_u$ of pairs of
unstable eigenvalues of $JL$ plus the number $n_s$ of stable eigenvalues with
negative Krein signature \cite{Sandstede,Sandstede2,Pelinovsky}
(also see \cite{GKP}).
If the present method (the Evans
function) detects a number of pairs of eigenvalues off the imaginary axis $n_u
= n_{total}$, there are no other unstable eigenvalues. If $n_u < n_{total}$,
perform a calculation of the Krein signature for eigenvalues on the imaginary
axis. If $n_u + n_s = n_{total}$ there are no other unstable eigenvalues,
otherwise, increase the area of search and repeat the whole process. 

\subsection[Results for singly-quantized vortices $m = 1$]%
{\label{sec:level2-signel}%
Singly-quantized vortices $m = 1$}

Similarily as in \cite{GG, KKC, Pu, Skryabin} we 
found singly-quantized vortex, $m = 1$, to be spectrally stable for all
values of the parameter $\mu$ investigated, $\mu\in (m+1,35)$, corresponding to
number of particles $N \in (0,10^6)$ for the data for \nucl{23}{}{Na} given in
Section~\ref{sec:level1-GP}.  By Propositions~\ref{jtheorem}, \ref{intopairs} 
and \ref{Esymmetries} it suffices to study only $j=1$.
For illustrative purposes the location of the stable eigenvalues ($\mu$ vs. $\Im \la$) 
for $j = 0, 1, 2, 3$, is plotted in Fig.~\ref{m1j0123}.  For the sake of clarity only eigenvalues with
$|\Im \la| < 5$ are shown. 

For small values of $\mu$, close to $\mu_0 = m + 1$, the eigenvalues are close
to the eigenvalues of the reduced uncoupled linear problem neglecting the
$|w|^2$ dependence. 
As $\mu$ increases, certain eigenvalues remain constant: a double eigenvalue
$\la = 0$ and simple eigenvalues $\la = \pm 2i$ for $j = 0$ and simple
eigenvalues $\la = \pm i$ for $j = 1$. These eigenvalues originate in the
symmetries and boosts of the Gross-Pitaevskii equation and are present for
every $m$ (see the Appendix).

\begin{figure}[!tbp]
\centering
\includegraphics[scale=0.5]{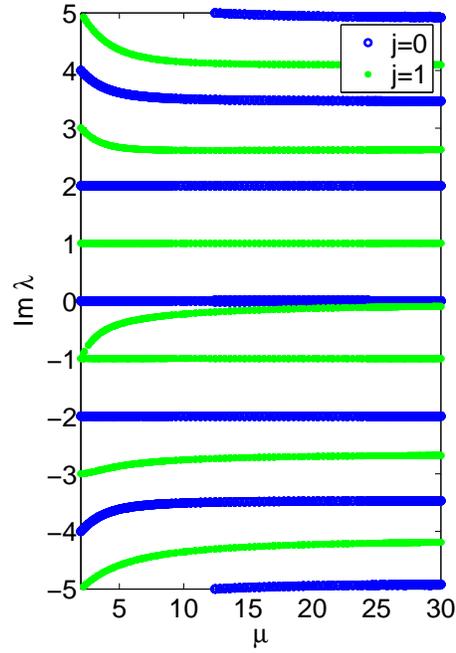}\qquad \qquad
\includegraphics[scale=0.5]{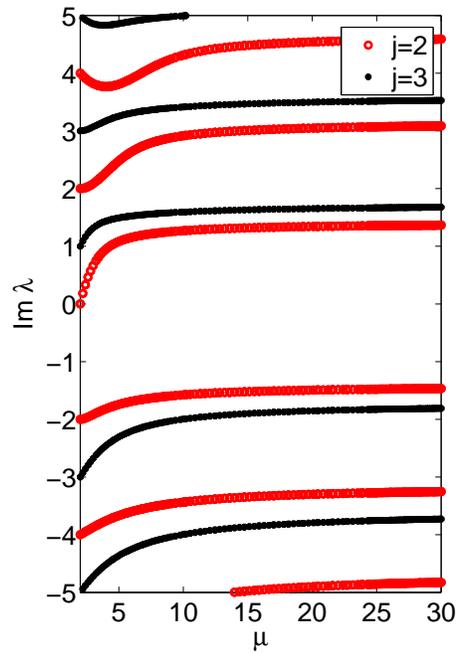}
 \caption[Stable eigenvalues for $m = 1$, $j = 0, 1, 2, 3$, $\mu \in (m+1,30)$] 
{\label{m1j0123}Stable eigenvalues of the linearization about 
a singly-quantized ($m = 1$)
vortex solution, $\mu \in (m+1,30)$:  the eigenvalues corresponding to modes
$j = 0, 1$ (left panel) and  $j = 2, 3$ (right panel).}
\end{figure}

\subsection[Results for multi-quantized vortices $m \ge 2$] % 
{\label{sec:level2-multi}%
Multi-quantized vortices $m \ge 2$}

The eigenvalue diagrams for multi-quantized vortices with $m = 2$ show more
complexity.  The radial vortex profile for $\mu\approx 35$ is illustrated in
Fig.~\ref{fig:m2p35}.

In the case $m = 2$ the possible unstable eigenvalues may appear for $|j| = 1,
2, 3$.  The modes $j = 0$  and $j = 1$ demonstrate the same features as in the
case of the singly-quantized vortex with the same constant eigenvalues: a
double eigenvalue $\la = 0$ and simple eigenvalues $\la = \pm 2i$ for $j = 0$
and simple eigenvalues $\la = \pm i$ for $j = 1$ 
(see Fig.~\ref{fig:m2j013} (a)).

A different behavior appears for modes $j = 2$ and $j = 3$, as shown on
Fig.~\ref{fig:m2j013} (b) and Fig.~\ref{fig:m2j2}.  For $j = 3$  there are no
unstable eigenvalues present and the stable eigenvalues do not collide but
rather diverge from each other when they approach each other. 
Collisions do occur for $j = 2$ and cause instability. A collision of two
stable purely imaginary eigenvalues produces a pair of stable and unstable
complex eigenvalues symmetric with respect to the imaginary axis. After an
increase of $\mu$ these two eigenvalues return to the imaginary axis
and split to two purely imaginary eigenvalues as illustrated in
Fig.~\ref{fig:pu}.  This ``collision -- split -- collision'' process (``bubbles
of instability'' \cite{mackay}) repeats regularly for the whole range of $\mu$
studied.

\begin{figure}[!ht]
\centering
\includegraphics[scale=0.46]{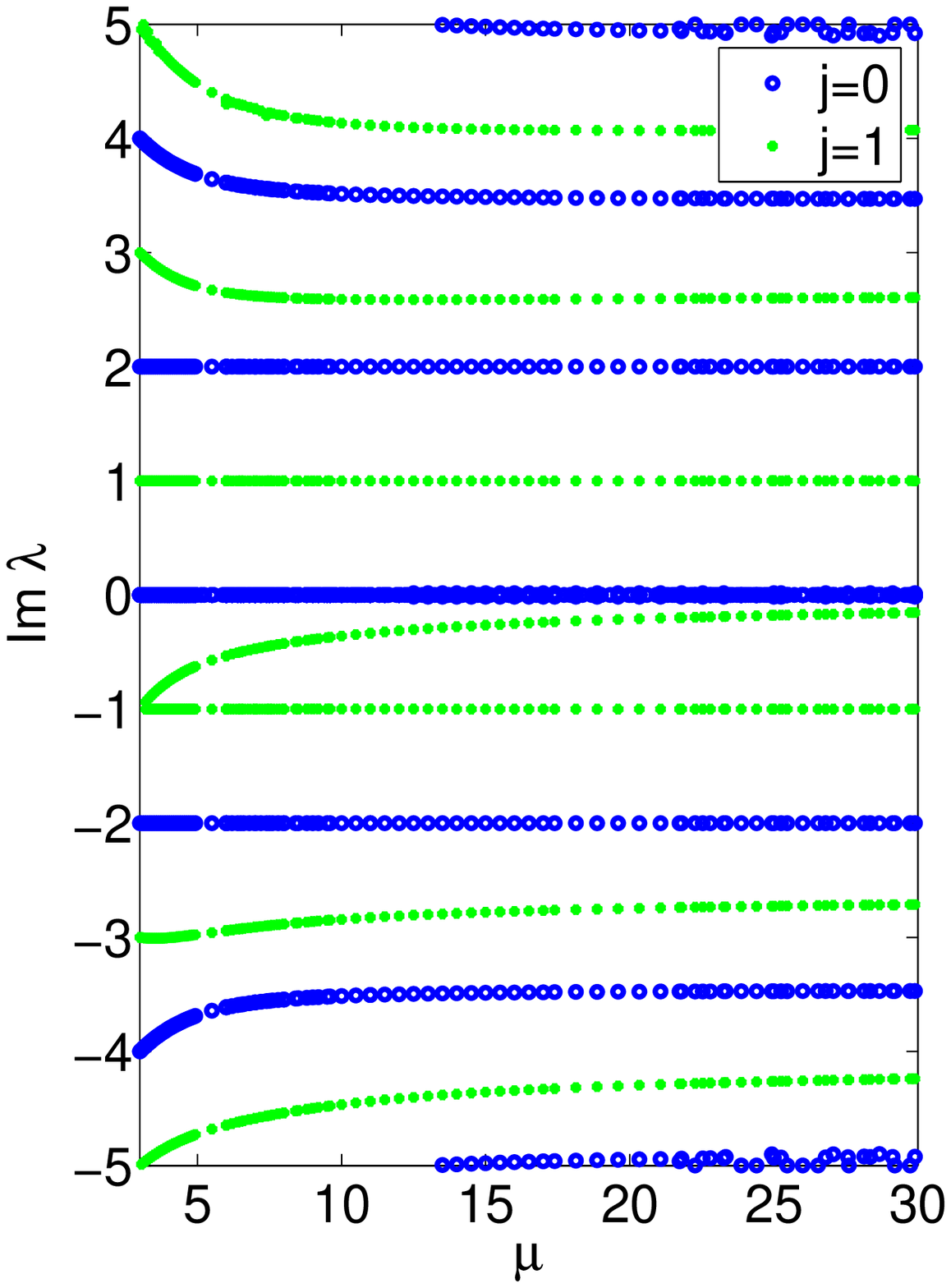}\qquad \qquad
\includegraphics[scale=0.46]{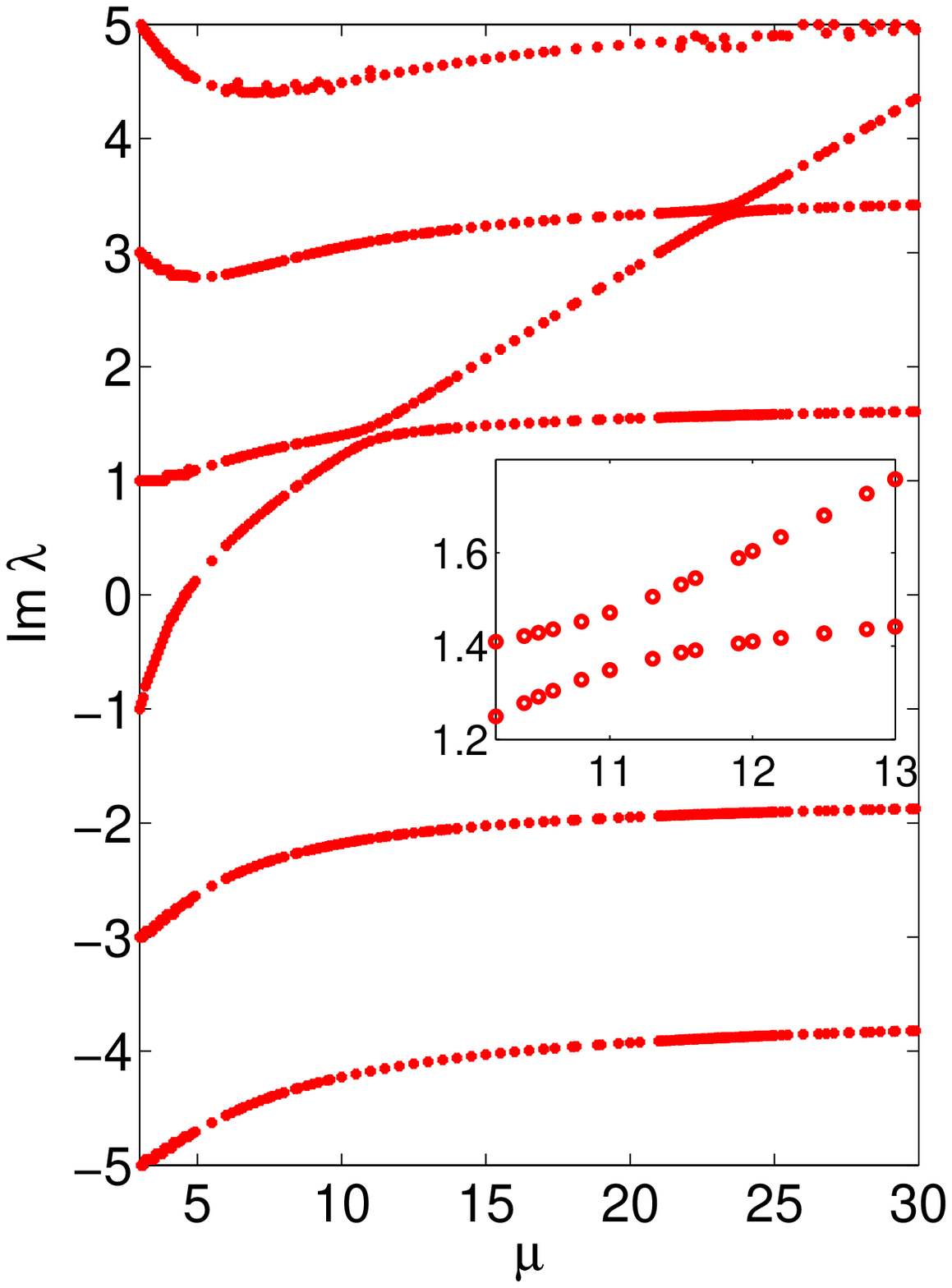}
\caption[Stable eigenvalues for $m = 2$, $j = 0, 1$ and  3, $\mu\in (m+1,35)$] 
{\label{fig:m2j013}Stable eigenvalues 
of the linearization about a multi-quantized
vortex solution, $m=2$, $\mu \in (m+1,35)$:  $j = 0,1$ (left panel) 
and $j = 3$ (right panel), with a detail 
of an avoided eigenvalue collision on the inset.}
\end{figure}

\begin{figure}[t]
\centering
 \includegraphics[scale=0.68]{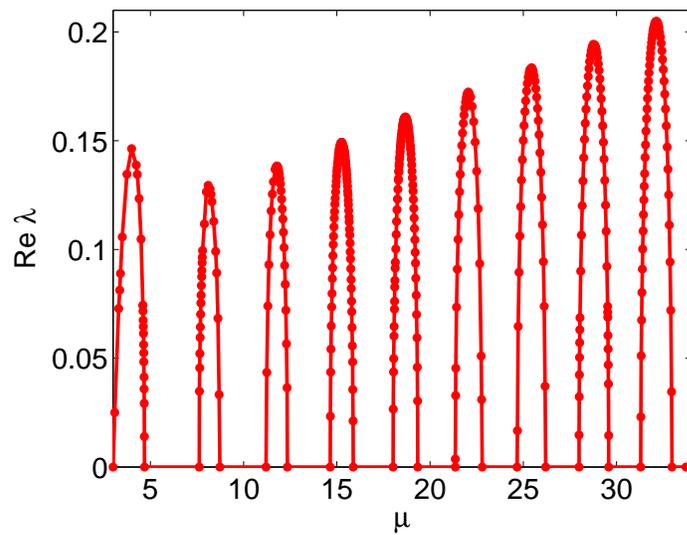}
\vspace{-\baselineskip}
 \caption[Real parts of eigenvalues for $m = 2$, $j = 2$, $\mu \in (m+1,35)$] 
{\label{fig:pu}Stable and unstable 
eigenvalues of the linearization about a multi-quantized
vortex solution, $m=2$, $\mu\in (m+1,35)$, corresponding to the mode $j = 2$.}
\end{figure}

\begin{figure}[!ht]
\centering
 \includegraphics[scale=0.6]{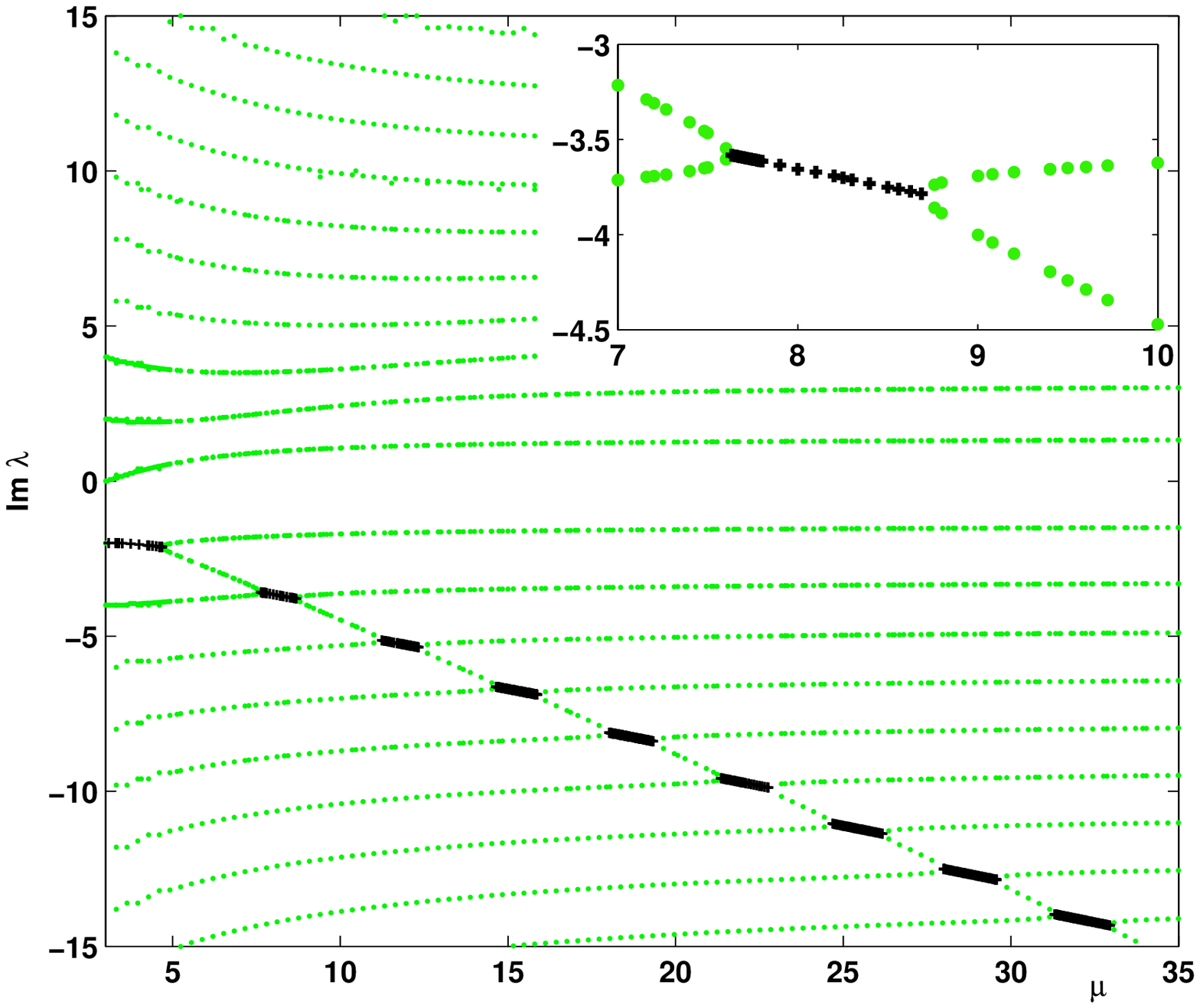}
 \caption[Imaginary parts of eigenvalues for $m = 2$, $j = 2$, $\mu \in (m+1,35)$] 
{\label{fig:m2j2}
Imaginary part of stable (light) and unstable (dark)
eigenvalues of the linearization about a multi-quantized
vortex solution, $m=2$, $\mu\in (m+1,35)$, corresponding to mode $j = 2$. 
A detail of a bubble of instability on the inset.}
\end{figure}

This behavior is caused by the presence of a single eigenvalue with negative
Krein signature \cite{mackay, Skryabin}. The imaginary part of this
eigenvalue is decreasing  with increasing $\mu$, and on its way it encounters
eigenvalues with the opposite signature. After each collision the eigenvalues
split off the imaginary axis and become eigenvalue pairs 
with zero Krein signature symmetric relative to the imaginary axis. 
Reversibility of this process suggests that the eigenvalues come back to the
imaginary axis and the process repeats itself (for a larger parameter~$\mu$).
This indicates a surprising thing:
transitions to instability for larger $\mu$ may happen at a large
frequency ($\mbox{Im}\, \la$ large), and  therefore there is no hope
to confine the imaginary parts of unstable eigenvalues to a finite interval
independent of~$\mu$.  The behavior of the eigenvalues demonstrates strong
agreement with \cite{Pu}. This is also consistent with the results of Seiringer
\cite{S} where he proved that for any $m \ge 1$ for large enough $\mu$ the vortex
becomes energetically unstable in the sense that it cannot be a global
minimizer of the energy and is subject to symmetry breaking.  Also note, that
the maximum real part of unstable eigenvalues grows very slowly with growing
parameter $\mu$.  The maximum of the real part is much smaller than the bound we
were able to obtain in~Proposition~\ref{boundtheorem}.

In the case $j = 3$ the eigenvalues have the same Krein signature and therefore
they cannot split off the imaginary axis The eigenvalue which originates at
$\la = -i$ for $\mu = 3$ has negative signature at first, but after it crosses
zero for  $\mu$ close to 5, it changes its signature to positive according to
Corollary~\ref{crossing}.  Then all eigenvalues have the same signature, making
splitting impossible.  Instead, eigenvalues repel each other upon approach, as
expected by \cite{mackay}.

\subsection[Further Results]%
{\label{sec:level2-furtherresults}% 
Further results}

The presence of exponential instability was also checked by direct simulations.
First, an approximate eigenvector was obtained by Galerkin approximation
\cite{GG} and then the Strang-splitting scheme \cite{Bao} was used for time
evolution.  The initial perturbation of a vortex solution by an approximated
eigenvector showed a good agreement with the expected exponential growth.

For the case $m=3$, we performed similar computations but omit details for brevity.
There are more eigenvalues of negative Krein signature, and overlapping 
bubbles of instability. See \cite{Pu} for the analog of Fig.~\ref{fig:pu} 
in this case.

Finally, it would be plausible to describe the asymptotics of the
eigenvalues as $\mu \cto \infty$ when the  condensate approaches the
Thomas-Fermi regime. We were only able to study the asymptotic
behavior of purely imaginary eigenvalues numerically by plotting a
loglog plot of the first order differences of $\la(\mu)$. 
We observed a clear linear trend implying an algebraic approach to a limit. 
Our numerical results agree with the recent analytical results in~\cite{PelKer} for 
stability of vortex solutions in the limiting Thomas-Fermi regime
(see also \cite{GalloPel} for stability of ground states).

The approximate behavior of eigenvalues with a small imaginary part 
for $m = 1$, $j = 1,2$, and $m = 2$, $j = 2,3$, is presented in Table~\ref{T:1}.
For most eigenvalues, the asymptotic behavior as $\mu \gg 1$ is well approximated by 
$$
\la(\mu) = b - \frac{c}{\mu} .
$$ 
On the other hand, certain eigenvalues show different rate of convergence 
clearly distinct from $(-\mu^{-1})$, but we do not have any explanation of this phenomena. 

\renewcommand\arraystretch{1.2}
\begin{table}
\centering
\begin{tabular*}{0.5499\textwidth}{|c|c|l|l|}
\hline
$m=1$   & $j = 1$  & $\la_0 = -3i$ & $-i\la(\mu)= -2.64 - 1.24\, \mu^{-1.02} $ \\
\hline
       &         & $\la_0 = -i$ & $-i\la(\mu) =  0.01 - 1.48\, \mu^{-0.80} $ \\
\hline
       &         & $\la_0 =  3i$ & $-i\la(\mu) =  2.73 - 0.17\, \mu^{-0.14} $ \\
\hline
      & $j=2$    & $\la_0 = -2i$ & $-i\la(\mu) = -1.41 - 1.82\, \mu^{-1.04} $ \\
\hline
      &          & $\la_0 =  0$ & $-i\la(\mu) =  1.41 - 1.61\, \mu^{-1.04} $ \\
\hline
      &          & $\la_0 =  2i$ & $-i\la(\mu)=  3.16 - 2.43\, \mu^{-1.02} $ \\
\hline
$m = 2$ & $j = 2$ & $\la_0 = -4i $ & $-i\la(\mu) = -1.41 - 2.69\, \mu^{-0.96} $ \\
\hline
      &          & $\la_0 =  0 $ & $-i\la(\mu) =  1.38 - 7.96\, \mu^{-1.39} $ \\
\hline
      &          & $\la_0 =  2i $ & $-i\la(\mu) =  3.11 -27.66\, \mu^{-1.59} $ \\
\hline
      & $j = 3$  & $\la_0 = -3i $ & $-i\la(\mu) = -1.74 - 5.14\, \mu^{-1.06} $ \\
\hline
      &          & $\la_0 = -i $ & $-i\la(\mu) =  1.73 - 3.97\, \mu^{-1.02} $ \\
\hline
      &          & $\la_0 =  i $ & $-i\la(\mu) =  3.61 - 5.30\, \mu^{-0.97} $ \\
\hline
\end{tabular*}
\caption{Approximate asymptotic behavior of eigenvalues for $m = 1, 2$.}
\label{T:1}
\end{table}

\section*{Acknowledgments}
This work was supported by the
National Science Foundation [DMS07-05563 to R.K., DMS06-04420,  DMS09-05723 to R.L.P.)]; 
the Center for Nonlinear Analysis (CNA) under National Science Foundation Grant [0405343, 0635983];
the European Commmission Marie Curie International Reintegration Grant [PIRG-GA-2008-239429];
a Dissertation Fellowship from the University of Maryland;
and a Rackham Fellowship from the University of Michigan, Ann Arbor.

%%%%%%%%%%%%%%%%%%%%%%%%%%%%%%%%%%%%%%%%%%
\appendix
\section*{Appendix: Symmetries and eigenvalues}
The symmetries of the Gross-Pitaevskii equation and its linearization 
imply the presence of a special set of eigenvalues. 

{\em Phase symmetries.} For any $m$,
$$
\la= 0 \quad \mbox{(for $j = 0$)}
$$ 
is a constant double eigenvalue for all $\mu \ge  \mu_0$. 
Its multiplicity comes from the symmetry of the Gross-Pitaevskii equation under
a phase change (this generates an eigenvector) and under a change of
standing-wave frequency (this generates a generalized eigenvector).  
A detailed discussion on these two symmetries and their implications for spectra
is given in \cite{PW}. 

{\em GGV boost.}
Due to the presence of the harmonic potential, the other usual invariants of the nonlinear
Schr{\"o}dinger equation---spatial translations---do not apply. 
Similarly, one cannot perform the typical Galilean boost.
Instead, one can apply a
boost for quadratic potentials that was described by Garc{\'\i}a-Ripoll {\it et al.}
\cite{GGV}.
This particular boost transforms any solution 
$\Psi(\rr,t)$  ($\rr = (x,y)$)
to a new solution of the form
\begin{equation}\label{GGV}
\Psi_R (\rr, t) = \Psi(\rr - R(t),t) \exp(i\thtt(\rr,t))\, ,
\end{equation}
where $R(t)$ is the path of a classical particle moving in the potential well.
For the harmonic potential $V(\rr)=\frac12 \rr^2$, this requires simply
$R_{tt} = -R$, so that each component of $R(t)$ can be any linear combination of 
$\cos t$ and $\sin t$. According to \cite{GGV}, $\thtt$ is given up to
a constant by $\thtt(\rr,t)=\rr\cdot R_t$.

As discussed in \cite{PW}, any one-parameter family $u_{\tau}$ of solutions to
\eq{GPfinal} gives rise to a solution of the corresponding equation linearized
about $u_0$, given by 
$$
\tilde{u} := \partial_{\tau} u_{\tau} |_{\tau = 0} \, .
$$
In the case of the GGV boost \eq{GGV}, setting $R(t) = \tau(\cos t, \sin t)^{T}$ 
gives $\thtt(r,\tht,t) = \tau r \cos(\tht - t)$.  
Then $u_{\tau}$ is given by \eq{GGV} with
$\Psi(\rr,t)=\psi(t,r,\tht)$ as given by \eq{vortexS} with $w(r)$ satisfying \eq{GPradial}.
Hence 
$$
\tilde{u} =  \binom{\cos t}{\sin t} \cdot \grad \psi(t,r,\tht)    + i r \cos(\tht - t) \psi(t,r,\tht) \, ,
$$
which can be also written as
$$
\tilde{u} = e^{-i\mu t} e^{im\tht} 
\left[w'(r) \cos(\tht - t) + i \frac{mw}{r} \sin(t-\tht) + irw\cos(t-\tht) \right] \, .
$$
The vector $\tilde{\Psi} = (\Re \tilde{u}, \Im \tilde{u})^{T}$ 
then satisfies the linearized equations \eq{linpGPvec} and has the form
\begin{eqnarray*}
e^{\mu t} \tilde{\Psi} & = &
\binom{\cos m\tht}{\sin m\tht} \binom{\cos \tht}{\sin \tht} \cdot \binom{\cos t} {\sin t} w'(r) \\
&& +  \binom{-\sin m\tht}{\cos m\tht}\!\left[- \binom{\cos \tht}{\sin \tht}\! 
\cdot \! \binom{-\sin t} {\cos t} \frac{mw(r)}{r}
+ \binom{\cos \tht}{\sin \tht} \!\cdot\! \binom{\cos t} {\sin t} rw(r) \right].
\end{eqnarray*}
In the decoupling to Fourier modes this solution yields a solution to the system \eq{Ff1}--\eq{Ff2} for 
$j = 1$, $\la = i$ 
and
$$
y_+ = \frac 12 \left[w'(r) - \frac{mw(r)}{r} + rw(r)\right]\, ,\quad
y_- = \frac 12 \left[w'(r) + \frac{mw(r)}{r} - rw(r)\right]\, .
$$
The similar solution can be also obtained for $\la = - i$: 
$$
y_+ = \frac 12 \left[w'(r) - \frac{mw(r)}{r} - rw(r)\right]\, ,\quad
y_- = \frac 12 \left[w'(r) + \frac{mw(r)}{r} + rw(r)\right]\, .
$$
Therefore the GGV boost implies the existence of the eigenvalues  (for any $m$)
$$
\la = \pm i \quad\mbox{(for $j=1$).}
$$ 

{\em Breather boost.}
Finally, we describe the source of the presence of the eigenvalues
$$
\la = \pm 2i  \quad\mbox{(for $j=0$)},
$$ 
which appear for every $m \ge 1$ and $\mu \ge \mu_0$.
These eigenvalues corresponds to a ``breathing'' symmetry of the Gross-Pitaevskii equation
in $2+1$ dimensions, as explained by Pitaevskii and Rosch in \cite{PR}.
Here we provide a somewhat different perspective, showing that
the symmetry corresponds to the Talanov lens
transformation via an exact transformation to the cubic Schr{\"o}dinger equation.
Rybin {\it et al.} \cite{RB} noted that the radially symmetric \gp\ equation in
2+1 dimensions transforms exactly to the cubic \schr\ equation
without potential, according to a transformation described originally 
by Niederer \cite{Ni74} for the linear \schr\ equation.  
More generally, they noted that this transformation works in any
dimension as long as the nonlinearity is critical, of the form
$|u|^{4/n}u$ in $n$ space dimensions. This symmetry was used by Carles
\cite{Carles} to study various mathematical aspects of the \gp\
equation.

By transforming from GP to cubic \schr, using a Talanov
lens transformation \cite{Tal}, and transforming back, one can get a
self-transformation of GP (see also \cite{Ghosh}).  
The symmetry that we will describe is somewhat more general, and
works as follows. Suppose that $v(x,t)$ is any solution
of the equation
\begin{equation}\label{gp1}
\rmi \partial_t  v + \frac{1}{2}\Delta v -\frac{1}{2}{\om^2} |x|^2 v - \lambda |v|^{p} v = 0
\end{equation}
where $t\in\R$ and $x\in\R^n$. Let 
\begin{equation}\label{udeff}
u(t,x) = a \rme^{-\rmi b |x|^2/2} v(c x,\tau)
\end{equation}
where $a$, $ b$, $c$ and $\tau$ are functions of time $t$. 
Then we find that 
\begin{equation}\label{ueq}
\rmi \partial_t u  +\frac{1}{2}\Delta u - \frac{1}{2} {\nu^2} |x|^2 u - \gamma |u|^p u =0 ,
\end{equation}
provided that $b$ and $c$ satisfy
\begin{equation}\label{bcsys}
b'- b^2 +{\om^2} c^{4}={\nu^2} , \quad c'= b c,
\end{equation}
and 
\begin{equation}\label{atg}
a=c^{n/2}, \quad \tau'=c^2, \quad \gamma= \lambda c^{2-np/2}. 
\end{equation}
Of course, $b=0$, $c=1$,  always works with ${\nu^2}={\om^2}$.

The symmetry is most interesting when the nonlinearity is critical, 
meaning $p=4/n$. Then $\gamma=\lambda$.  In 2+1 dimensions this 
corresponds to the cubic nonlinearity.
For ${\om^2}={\nu^2}=0$ we recover the well-known 
Talanov lens transformation for the critical \schr\ equation:
\begin{equation}\label{talanov}
b= \frac{b_0}{1-b_0t}, \quad c = \frac{c_0}{1-b_0 t},
\quad \tau = \frac{c_0^2 t}{1-b_0t}.
\end{equation}
With ${\om^2}=0$ and constant ${\nu^2}>0$ we get the transformation 
from the critical
\schr\ equation to GP as described by Rybin {\it et al.} \cite{RB} and Carles
\cite{Carles}:
\begin{equation}\label{nls2gp}
b = \nu\tan \nu t, \quad c = \frac{1}{\cos \nu t},
\quad \tau = \frac{\tan \nu t}{\nu}.
\end{equation}
In the other direction with constant ${\om^2}>0$ and ${\nu^2}=0$ we
get \cite{Carles}:
\begin{equation}\label{gp2nls}
b= -\frac{{\om^2} t}{1+{\om^2} t^2},
\quad
c= \frac{1}{\sqrt{1+{\om^2} t^2}},
\quad
\tau= \frac{\arctan \omega t}{\omega}.
\end{equation}

The consequences of this transformation for the \gp\ equation in 2+1 dimensions 
are striking. From any solution of \eq{gp1}, one gets from \eq{udeff}
a solution obtained by a {\it breather boost} --- a time-periodic
dilation of space with an appropriate radial phase adjustment. 
See \cite{PR} for more discussion and a relation to representations
of the group SO(2,1).

A short calculation shows that this transformation corresponds to 
the eigenvalues $\pm 2i$ of the normalized linearized equations \eq{Ff1}--\eq{Ff2}.
The relation to these eigenvalues can be seen also from a simple consideration, that
this self-transformation  produces small oscillations at exactly {\it twice} the trap
frequency for classical oscillations in the harmonic potential
$\frac12{\om^2}|x|^2$.
The exact formulae for this breathing boost also show that breathing
oscillations need not be small. Also note that a breather boost can be applied to any
solution, not only standing waves.

\end{document}